\documentclass[12pt,oneside,reqno]{amsart}
\usepackage{amssymb}
\usepackage[active]{srcltx}
\usepackage{a4wide}
\usepackage[section]{placeins}
\usepackage{amsmath,amsthm}
\usepackage{amssymb}
\usepackage{amstext}
\everymath{\displaystyle}
\usepackage{cite}
\usepackage{epsfig}
\usepackage{float}
\usepackage{enumerate}
\usepackage{color}
\usepackage{amssymb}
\usepackage[pagewise]{lineno}
\usepackage[dvipsnames]{xcolor}
\usepackage{xcolor}
\usepackage{soul}
\usepackage{ragged2e}
\newtheorem{theorem}{Theorem}[section]
\newtheorem{lemma}[theorem]{Lemma}
\newtheorem{proposition}[theorem]{Proposition}
\theoremstyle{definition}

\newtheorem{corollary}[theorem]{Corollary}
\theoremstyle{remark}
\newtheorem{remark}[theorem]{Remark}
\newtheorem{note}[theorem]{Note}
\usepackage[linkcolor=blue, urlcolor=blue, citecolor=blue,
colorlinks, bookmarks]{hyperref}



\newcommand{\be}{\begin{equation}}
\newcommand{\ee}{\end{equation}}

\title[]{Dimension of Inhomogeneous Sub-Self-Similar Sets}












\title{Dimension of Inhomogeneous Sub-Self-Similar Sets}

\author{Shivam Dubey}
\address{Department of Applied Sciences, IIIT Allahabad, Prayagraj, India 211015}
\email{rss2022509@iiita.ac.in}
\author{Saurabh Verma}

\address{Department of Applied Sciences, IIIT Allahabad, Prayagraj, India 211015}

\email{saurabhverma@iiita.ac.in}

\subjclass[2010]{Primary 28A80; Secondary 28A78, 26A18}

\keywords{Self-similar sets, Inhomogeneous IFS, Hausdorff dimension, Box dimension, Open-set condition}

\begin{document}

\maketitle

\vspace{-1em}

\begin{abstract}
In this paper, we introduce the concept of inhomogeneous sub-self-similar (ISSS) sets, building upon the foundations laid by Falconer (Trans. Amer. Math. Soc. 347 (1995) 3121–3129) in the study of sub-self-similar sets and drawing inspiration from Barnsley's work on inhomogeneous self-similar sets (Proc. Roy. Soc. London Ser. A 399 (1985), no. 1817, 24). We explore a range of ISSS sets and present a method for constructing them. We also investigate the upper and lower box dimensions of ISSS sets and discuss the continuity of the Hausdorff dimension.
\end{abstract}


\section{Introduction}\label{section 1} 
A wide variety of fractals exhibit self-similarity, that is, they consist of infinitely many smaller replicas that resemble the whole. Hutchinson \cite{Hut} provided a comprehensive framework for self-similar sets and constructed them mathematically using \textit{iterated function systems (IFS)}. The construction of self-similar sets via IFS has been extensively studied and popularized by Barnsley and his collaborators \cite{MF1, MF2, MF3}. Barnsley \cite{MF1} introduced the concept of fractal interpolation functions (FIFs), and since then, many researchers have worked on FIFs and their applications. For further details, interested readers may see \cite{SS5, Celik, KVB1, LCP1, M2, Ri2, Ruan3, Ruan4, Verma21}. Barnsley and Demko \cite{MF3} extended the IFS framework by introducing \textit{inhomogeneous IFSs}, which include the classical IFS together with a compact set called a \textit{condensation set}. Inhomogeneous IFSs generalize the classical notion of IFS by reducing to the standard self-similar case when the condensation set is empty. Fraser \cite{Fraser1} made significant and notable contributions in this area, and Snigireva \cite{Sn}. For some related results on inhomogeneous IFSs, we refer the reader to \cite{B, K, Bu, Frase}. Furthermore, Mauldin and Williams \cite{Rd} introduced graph-directed IFS (GD-IFS), which generalizes the classical IFS framework. In this regard, Boore and Falconer \cite{bFalc} provided an example of a graph-directed fractal that a classical IFS cannot generate. More recently, Dubey and Verma further generalized GD-IFS by introducing the concept of \textit{inhomogeneous GD-IFS}; see \cite{DV1, DV2, Nussbaum1} for details. In \cite{Fal1}, Falconer introduced the notion of \textit{sub-self-similar (SSS) sets} by replacing equality in the self-similar equation with set inclusion. Computing the exact dimension of a fractal set remains a challenging problem in fractal geometry, and various methods have been developed to estimate the fractal dimensions. Chandra and Abbas \cite{Chana} computed the fractal dimension of mixed Riemann--Liouville integrals using the $\delta$-covering method. Falconer \cite{Fal} studied the fractal dimensions using the Fourier transform method and also explored potential-theoretic techniques and the mass distribution principle. Priyadarshi \cite{Pr1} estimated better lower bounds for the Hausdorff dimension of sets of complex continued fractions using operator-theoretic techniques. Chandra and Abbas \cite{SS5} investigated the fractal dimension of fractal functions using function space techniques. Hochman \cite{MHOCHMAN} studied the Hausdorff dimension of self-similar sets and measures on the real line using entropy methods and established an inverse theorem for the entropy of convolution. Shmerkin \cite{Shmerkin} extended this work by studying the $L^q$-dimension of dynamically driven self-similar measures. Achour and Selmi \cite{Ach} studied the general packing dimension of typical attractors and measures. Li \textit{et al.} \cite{Acho} investigated the generalized fractal dimensions of the graphs of functions obtained via the sum and product of continuous functions. Cui \textit{et al.} \cite{Cui} discussed the variational principle relating packing topological pressure to the measure-theoretic pressure of Borel probability measures associated with non-autonomous IFSs. For a given sequence of real numbers, Yu \textit{et al.} \cite{Byu} examined the relationship between convergence and divergence of series and fractal dimensions. Following Falconer's work \cite{Fal1} and using the concept of inhomogeneous self-similar sets, we introduce and construct \textit{inhomogeneous sub-self-similar sets (ISSS)}. Motivated by Fraser's work \cite{Fraser1}, we estimate the fractal dimension of these sets using the $\delta$-covering method.

\par The outline of the paper is as follows: Section $2$ provides the preliminaries. In Section $3$, we present examples of ISSS sets and discuss a method to construct them, while Sections $4$ and $5$ are devoted to estimating the fractal dimension of these sets and to examining the continuity of the Hausdorff dimension for ISSS sets, respectively. In Section $6$, we see the self-similar structure on the product of inhomogeneous IFSs. Finally, Section $7$ concludes the paper with remarks and outlines for potential future research directions.
\par

\section{Preliminaries}\label{Section 2}
In this section, we recall the notion of sub-self-similar sets and introduce the concept of inhomogeneous sub-self-similar sets.

Let $(X, d)$ be a complete metric space, and let $f_1, f_2, \dots, f_N$ be similarity mappings on $X$. A unique non-empty compact set $A$ is called the \emph{self-similar set} associated with the iterated function system (IFS) $\mathcal{I} := \{X; f_1, f_2, \dots, f_N\}$ if $A$ satisfies the following equation, known as the \emph{self-similar equation}:
\begin{equation}\label{eq1}
A = \bigcup_{i=1}^N f_i(A).
\end{equation}

Falconer \cite{Fal1} generalized the concept of self-similar sets by replacing equality in the self-similar equation with set inclusion. A non-empty compact set $F \subseteq X$ is called a \emph{sub-self-similar set (SSS set)} if it satisfies:
\begin{equation}\label{eq2}
F \subseteq \bigcup_{i=1}^N f_i(F).
\end{equation}

Given a compact set $C \subseteq X$, referred to as the \emph{condensation set}, a unique non-empty compact set $A_C$ is called the \emph{inhomogeneous self-similar set} associated with the IFS $\mathcal{I} := \{X; f_1, f_2, \dots, f_N, C\}$ if it satisfies the following equation, known as the \emph{inhomogeneous self-similar equation}:
\begin{equation}\label{eq3}
A_C = \bigcup_{i=1}^N f_i(A_C) \cup C.
\end{equation}

Analogously, we define \emph{inhomogeneous sub-self-similar sets (ISSS sets)}, which extend the concept of inhomogeneous self-similar sets by replacing equality in \eqref{eq3} with set inclusion. A non-empty compact set $F_C \subseteq X$ is called an ISSS set if it satisfies:
\begin{equation}\label{eq4}
F_C \subseteq \bigcup_{i=1}^N f_i(F_C) \cup C.
\end{equation}

We briefly recall the separation conditions needed in the upcoming sections. The IFS $\mathcal{I}$ satisfies the \emph{strong separation condition (SSC)}, if 
$f_{i}(A) \cap f_j(A) = \emptyset ~~ \text{for all} ~ i \neq j.$
If there exists a non-empty bounded open set $U$ such that 
$\bigcup_{i=1}^N f_i(U) \subseteq U ~~ \text{and}~~ f_i(U) \cap f_j(U)= \emptyset ~~\text{for each}~~ i \neq j.$
Then, we say that IFS $\mathcal{I}$ satisfies the \emph{open set condition (OSC)}. Further, we say it satisfies the \emph{strong open set condition (SOSC)}, if the open set $U$ has a non-empty intersection with the attractor $A$. 
For the inhomogeneous IFS $\mathcal{I}_C$, if the homogeneous IFS satisfies the OSC and $C \subset U$, then we say that the inhomogeneous IFS $\mathcal{I}_C$ satisfies the \emph{inhomogeneous open set condition (IOSC)}. Further, if $U$ has a non-empty intersection with the corresponding inhomogeneous attractor $A_C$, then IFS $\mathcal{I}_C$ satisfies the \emph{inhomogeneous strong open set condition (ISOSC)}.\\

Recall that the upper and lower box-dimensions of a non-empty compact set $F \subset \mathbb{R}^n$ are given by
 $$\overline{\dim}_{B}(F) = \limsup_{\delta \to 0^+ } \frac{\log N_{\delta}(F)}{-\log \delta}, ~\text{and} ~\underline{\dim}_{B}(F) = \liminf_{\delta \to 0^+ } \frac{\log N_{\delta}(F)}{-\log \delta}, ~\text{respectively},$$
 where $N_{\delta}(F)$ represents the minimum number of sets of diameter $\delta$ covering $F$,
Furthermore, if the above two limits are equal, then the box dimension of $F$ is given by 
$${\dim}_{B}(F) = \lim_{\delta \to 0^+ } \frac{\log N_{\delta}(F)}{-\log \delta}.$$

For $0\le s \le n$ the $s$-dimensional Hausdorff measure of a set $F \subseteq \mathbb{R}^n$ is defined by 
$$H^s(F)= \lim_{\delta \to \infty} H^s_\delta(F),$$
where $H^s_\delta(F)= \inf\big\{\sum_{i=1}^{\infty} |U_i|^s : F \subseteq \bigcup_{i=1}^{\infty} U_i ~\text{and}~ |U_i| \le \delta\big\}$ and $|U_i|$ denote the diameter of $U_i \subseteq \mathbb{R}^n.$
The Hausdorff dimension of $F \subseteq \mathbb{R}^n$ is the threshold value at which the Hausdorff measure jumps from infinity to zero, i.e., the Hausdorff dimension of $F$ is the number $0 \le d \le n$ such that $H^s(F)= \infty,$ if $0 \le s < d$ and $H^s(F)=0,$ if $d <s \le n.$
\subsection{Code Space}
In this subsection, we see the \emph{code space (symbolic space)} associated with finite symbols $N$ of the set $I = \{1,2,\ldots, N\}$.
 Let $I^\infty$ denote the set of all infinite sequences made up of $\{1, \ldots, N\}$, that is,
\[
I^\infty := \left\{ \{\omega_k\}_{k=1}^{\infty} :\, \omega_k \in \{1, \ldots, N\} \right\}.
\]
Any element $\omega \in I^\infty,$ we say $\omega$ is a string of length infinite and write $\omega := \omega_1 \omega_2 \dots \omega_k \omega_{k+1} \dots$, where $\omega_k$ represents the $k$-th term of the sequence.\\
Let $\rho_1, \rho_2,\ldots,\rho_n$ be real numbers such that $0< \rho_i <1$ for all $1 \le i \le N$, then define 
$$d_\infty(\omega, \omega')= \rho_1 \cdots \rho_k,$$
if the infinite strings $\omega = \omega_1 \omega_2 \dots \omega_k \omega_{k+1} \dots$ and $\omega' = \omega'_1 \omega'_2 \dots \omega'_k \omega'_{k+1} \dots$ agree up to the $k$-th term and differ at the $(k+1)$-th term. If the sequences are identical, then $d_\infty(\omega,\omega') = 0$; and if they differ at the first term, then $d_\infty(\omega, \omega') = 1$. The map $d_\infty$ is a metric on $I^\infty$, and $(I^\infty, d_\infty)$ is a compact, complete, and totally disconnected space. \\

 Let $\mathcal{I}$ be an IFS consisting of similarity mappings $\{f_1, \ldots, f_N\}$, and let $A$ be the unique self-similar set associated with $\mathcal{I}$. There is a well-known map called the \emph{coding map} $\Theta: I^\infty \to A$ defined by
\[
\Theta(\omega) := \lim_{n \to \infty} f_{\omega_1} \circ \cdots \circ f_{\omega_n} (x) = \bigcap_{n=1}^{\infty} f_{\omega_1} \circ \cdots \circ f_{\omega_n}(A), \quad \text{for all}~ \omega = \omega_1 \omega_2 \dots \in I^{\infty},
\]
which associates each infinite string $\omega$ with a unique point in the attractor $A$.\\
\begin{note}
The coding map $\Theta$ is onto but not necessarily one-to-one. If it is one-to-one, then it becomes a homeomorphism onto the attractor. It is easy to observe that when the IFS satisfies the SSC, the coding map is a homeomorphism onto the attractor. 
\end{note}
Let $\omega_{|_n}:= \omega_1 \omega_2 \ldots \omega_n$ represents the restriction of $\omega \in I^{\infty}$ up-to the $n$th term, then define $I^n:=\{\omega_{|_n}: \omega \in I^{\infty}\}$ and $I^*:= \bigcup_n I^n.$ We say $\omega= \omega_1 \omega_2 \ldots \omega_n \in I^*$ is a string of length $n$ and the length of the string $\omega$ is denoted by $|\omega|=n$. For two strings $\omega:= \omega_1\omega_2\ldots \omega_n \in I^n$ and $\omega':= \omega'_1 \omega'_2 \ldots \omega'_m \in I^m$, we define concatenation of $\omega$ and $\omega'$ by $\omega'':= \omega_1\ldots \omega_n \omega'_1 \ldots \omega'_m \in I^{n+m}$ is a string of length $n+m.$\\

For any subset $S \subseteq I^\infty$ and a positive integer $n$, define
\[
S^n := \left\{ \omega_{|_n} = \omega_1 \dots \omega_n :\, \exists ~ \omega = \omega_1 \omega_2 \dots \omega_k \dots \in S \right\}, \quad \text{and} \quad S^* := \bigcup_{n} S^n.
\]

A compact subset $S \subseteq I^\infty$ is said to be \emph{closed under left shift} if for every $\omega := \omega_1 \omega_2 \dots \in S$, the sequence $\omega':= \omega_2 \omega_3 \dots$ also belongs to $S$. 

\begin{remark}
The convergence \(\Theta(\omega) = \lim_{n \to \infty} f_{\omega_1 \ldots \omega_n}(x)\) is independent of the choice of the point \(x \in X\).
\end{remark}

For any \(\omega := \omega_1 \dots \omega_n \in I^n\), we write
\[
f_{\omega} := f_{\omega_1} \circ f_{\omega_2} \dots \circ f_{\omega_n}, \qquad 
\rho_{\omega} := \prod_{i=1}^n \rho_{\omega_i}.
\]

Given $s\ge0$, consider
$$\sum_{\omega \in S^{m+n}} \rho_\omega^s \le \sum_{\omega \in S^m, \omega' \in S^n} \rho_{\omega \omega'}^s = \bigg(\sum_{\omega \in S^m} \rho_{\omega}^s \bigg) \bigg(\sum_{\omega' \in S^n} \rho_{\omega'}^s\bigg).$$
This shows that the sequence $\sum_{\omega \in S^m} \rho_{\omega}^s$ is a sub-multiplicative sequence. Hence, by the standard properties of the sub-multiplicative sequences, the limit $\tau(s) \equiv \lim_{k \to \infty} \bigg( \sum_{\omega \in S^k} \rho_{\omega}^s \bigg)^{1/k}$
exists with $0 \le \tau(s) < \infty,$ see \cite{Fal1}.

 Now we state important results of Falconer \cite{Fal1} that characterize SSS sets and determine their dimensions.

\begin{proposition}[see~\cite{Fal1}, Proposition~2.1]
\label{prop2.1}
Let $\mathcal{I} = \{(X, d), f_1, \ldots, f_N\}$ be an IFS. Then a non-empty compact set $F \subseteq X$ is an SSS set for $\mathcal{I}$ if and only if $F = \Theta(S)$, for some compact set $S \subseteq I^\infty$ that is closed under left shift. Here, $\Theta(\omega) = \lim_{n \to \infty} f_{\omega_1}\circ \cdots \circ f_{\omega_n}(x)$ for any $\omega = \omega_1 \omega_2 \dots \in S$ and any $x \in X$.
\end{proposition}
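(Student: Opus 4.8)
The plan is to prove the two implications separately, relying on three standard facts about the coding map $\Theta$: that it is continuous with respect to $d_\infty$; that the code space $I^\infty$ is compact (a countable product of the finite alphabet $\{1,\dots,N\}$, with $d_\infty$ inducing the product topology); and that it intertwines the maps of the IFS with the one-sided shift. Writing $\sigma\colon I^\infty\to I^\infty$ for the left shift $\sigma(\omega_1\omega_2\cdots)=\omega_2\omega_3\cdots$, the key algebraic identity is
\[
\Theta(\omega)=f_{\omega_1}\bigl(\Theta(\sigma\omega)\bigr),
\]
which follows by pulling $f_{\omega_1}$ out of the defining limit and invoking its continuity. I would record this identity first, since both directions hinge on it.

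For the direction ($\Leftarrow$), suppose $F=\Theta(S)$ with $S\subseteq I^\infty$ compact and closed under left shift. Non-emptiness and compactness of $F$ are immediate, as $F$ is the continuous image of a non-empty compact set. To verify $F\subseteq\bigcup_{i=1}^N f_i(F)$, take $y\in F$ and write $y=\Theta(\omega)$ for some $\omega\in S$. The identity above gives $y=f_{\omega_1}\bigl(\Theta(\sigma\omega)\bigr)$, and since $S$ is closed under left shift we have $\sigma\omega\in S$, whence $\Theta(\sigma\omega)\in\Theta(S)=F$. Thus $y\in f_{\omega_1}(F)$, proving the inclusion. This direction should be entirely routine.

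For the direction ($\Rightarrow$), suppose $F$ is a non-empty compact sub-self-similar set and define
\[
S:=\bigl\{\omega\in I^\infty : \Theta(\sigma^k\omega)\in F \ \text{for all}\ k\ge 0\bigr\}.
\]
That $S$ is closed under the shift is immediate from the definition. For compactness I would write $S=\bigcap_{k\ge 0}(\Theta\circ\sigma^k)^{-1}(F)$; since $F$ is closed and $\Theta,\sigma$ are continuous, each term is closed, so $S$ is a closed subset of the compact space $I^\infty$ and hence compact. The inclusion $\Theta(S)\subseteq F$ is the case $k=0$ in the definition. The substantive content is the reverse inclusion $F\subseteq\Theta(S)$: given $y\in F$, the sub-self-similarity $F\subseteq\bigcup_i f_i(F)$ lets me pick $\omega_1$ and $y_1\in F$ with $y=f_{\omega_1}(y_1)$; iterating produces a code $\omega=\omega_1\omega_2\cdots$ and points $y_k\in F$ with $y_{k-1}=f_{\omega_k}(y_k)$. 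Unwinding yields $y=f_{\omega_1}\circ\cdots\circ f_{\omega_n}(y_n)$ for every $n$, and because the composed similarity contracts by $s_{\omega_1}\cdots s_{\omega_n}\to 0$ while the points $y_n$ remain in the bounded set $F$, one concludes $\Theta(\omega)=y$; the same argument applied to the tail $\sigma^k\omega$ shows $\Theta(\sigma^k\omega)=y_k\in F$, so that $\omega\in S$.

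I expect the main obstacle to be this last step: verifying simultaneously that the greedily constructed code $\omega$ genuinely codes the prescribed point $y$ (rather than some other point of the attractor) and that its entire forward orbit stays in $F$, so that $\omega\in S$. The convergence estimate must exploit the boundedness of $F$ to control $d(y_n,x)$ uniformly in $n$; every remaining part of the argument is a formal consequence of the continuity of $\Theta$ and the compactness of $I^\infty$.
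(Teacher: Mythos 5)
Your proposal is correct and takes essentially the same route as the proof of this statement in its cited source (Falconer's Proposition~2.1 in \cite{Fal1}, which the paper quotes without reproducing the argument): the shift identity $\Theta(\omega)=f_{\omega_1}\bigl(\Theta(\sigma\omega)\bigr)$ for the backward direction, and for the forward direction the invariant code set $S=\bigcap_{k\ge 0}(\Theta\circ\sigma^k)^{-1}(F)$ together with the greedy coding of a point of $F$ and the contraction estimate $s_{\omega_1}\cdots s_{\omega_n}\to 0$ against the boundedness of $F$. No gaps to report.
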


\begin{theorem}\label{thm51}(see, ~\cite[Theorem~ 3.5]{Fal1})
    Let $E$ be an SSS set for a family of contracting similarities $f_1,\ldots,f_N$. Let $s$ be the unique non-negative number such that 
    $\tau(s)\equiv \lim_{k \to \infty} \bigg( \sum_{i \in S^k} \rho_{i}^s \bigg)^{1/k} =1,$ then
    $$\dim_{H} E \le \underline{\dim}_B E \le \overline{\dim}_B E \le s.$$
    Furthermore, if it satisfies the OSC, then
    $$\dim_{H} E = \underline{\dim}_B E = \overline{\dim}_B E = s.$$
\end{theorem}

\section{Examples and Construction of Inhomogeneous Sub-Self-Similar Sets}

In this section, we present various examples of ISSS sets and provide one method for their construction.

\subsection*{Example 3.1 (Trivial Cases)}
Inhomogeneous self-similar sets and SSS sets are trivial examples of ISSS sets that satisfy Equation~\eqref{eq4}.

\subsection*{Example 3.2 (Union of ISSS Sets)}
Let $F_C$ be an ISSS set associated with the IFS $\{f_1, \ldots, f_N, C\}$, and let $F_{C'}$ be another ISSS set associated with the IFS $\{f'_1, \ldots, f'_{N'}, C'\}$. Then the union $F_C \cup F_{C'}$ is also an ISSS set associated with the IFS $\{f_1, \ldots, f_N, f'_1, \ldots, f'_{N'}, \\C \cup C'\}$.
Since
\begin{align*}
F_C &\subseteq \bigcup_{i=1}^N f_i(F_C) \cup C, \\
F_{C'} &\subseteq \bigcup_{i'=1}^{N'} f'_{i'}(F_{C'}) \cup C'.
\end{align*}
Thus,
\[
F_C \cup F_{C'} \subseteq \bigcup_{i=1}^N f_i(F_C \cup F_{C'}) \cup \bigcup_{i'=1}^{N'} f'_{i'}(F_C \cup F_{C'}) \cup (C \cup C').
\]

\subsection*{Example 3.3 (Augmenting ISSS Sets)}
Let $F_C$ be an ISSS set associated with the IFS $\{f_1, \ldots, f_N, C\}$. If $K \subseteq F_C$ is a compact subset and $f$ is a contracting similarity, then the set $F_C \cup f(K)$ is an ISSS set for the IFS $\{f_1, \ldots, f_N, f, C\}$. Since $f(K) \subseteq f(F_C)$, we have
\[
F_C \cup f(K) \subseteq \bigcup_{i=1}^N f_i(F_C \cup f(K)) \cup f(F_C \cup f(K)) \cup C.
\]

\subsection*{Example 3.4 (Boundary of an ISSS Set)}
Let $F_C$ be an ISSS set satisfying Equation~\eqref{eq4}. Then the topological boundary $\partial F_C$ of $F_C$ is also an ISSS set. 
 Any neighbourhood of $x \in \partial F_C$ contains points outside $F_C$, implying that $x \in \partial f_i(F_C).$ Also, there exists some $y \in F_C$ such that $f_i(y)=x$ for some $i.$ We claim that $y \in \partial F_C.$ Since $f_i$ is a contraction map, it is continuous and injective. Consequently, $y$ cannot be an interior point of $F_C$. Thus,  $y \in \partial F_C$ and $x \in f_i(\partial F_C),$ or $x \in \partial C \subseteq C$ completing the assertion.


\begin{remark}
Unlike inhomogeneous self-similar sets, ISSS sets are not unique. For example, consider the IFS $\mathcal{I} = \{f_1(x) = \frac{x}{2}, f_2(x) = \frac{x}{2} + \frac{1}{2}\}$ with a condensation set $C$. Both the set $\{0, 1\}$ and the set of all dyadic numbers in $[0, 1]$ are ISSS sets corresponding to $\mathcal{I}$ and $C$. In particular, if we consider $C=\{\frac{1}{2}\},$ then set 
$\{\frac{1}{2^n}: n \ge 1\} \cup \{1+\frac{1}{2^n}: n \ge 1\}$ is an ISSS with condensation set $C$ set but not an SSS set.
\end{remark}

\begin{remark}
Let $E$ be a bounded set that is not an SSS with respect to an IFS $\{f_i\}_{i=1}^N$. Then the set $C = \overline{E \setminus \bigcup_{i=1}^N f_i(E)}$ ensures that $E = \bigcup_{i=1}^N f_i(E) \cup C$, making $E$ an ISSS set. Moreover, for any $C' \supseteq C$, $E$ remains an ISSS set.
\end{remark}

\begin{remark}
In \cite{Mac}, McClure and Vallin provide examples of non-sub-self-similar sets, such as $E = \{0, 1, \frac{1}{2}, \frac{1}{3}, \ldots\}$, which is not an SSS set. As noted previously, there exists a condensation set $C$ such that $E$ becomes an ISSS set. Furthermore, for any compact set $E$, the minimal condensation set $C = \overline{E \setminus \bigcup_{i=1}^N f_i(E)}$ ensures that $E$ is an ISSS set.
\end{remark}
\begin{remark}
    Inhomogeneous self-similar sets are ISSS sets. Also, if we set the condensation sets $C$ to the empty set, then ISSS sets reduce to SSS sets, which are a generalization of self-similar sets. In this way, ISSS sets are a generalization of self-similar sets.
\end{remark}
\subsection{Construction of Inhomogeneous Sub-Self-Similar Sets}

We now present a constructive method for generating ISSS sets from a given SSS set and a condensation set.

\begin{theorem}
Let $E$ be an SSS set associated with the IFS $\mathcal{I}$ such that $\Theta(S) = E$, and let $C \subset X$ be a non-empty compact set. Define
\[
O_S := \bigcup_{{\omega} \in S^*} f_{{\omega}}(C).
\]
Then $ E\cup O_S$ is an ISSS set corresponding to the IFS $\mathcal{I}$ with condensation set $C$.
\end{theorem}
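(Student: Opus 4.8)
The plan is to verify the two requirements in the definition of an ISSS set for $F:=E\cup O_S$, namely that $F$ is non-empty and compact and that $F\subseteq\bigcup_{i=1}^{N}f_i(F)\cup C$. Non-emptiness is immediate since the sub-self-similar set $E$ is non-empty. Before anything else I would record the observation that the union defining $O_S$ contains the term attached to the empty word, i.e. $f_\emptyset=\mathrm{id}$ gives $C\subseteq O_S\subseteq F$; this is precisely the ingredient that allows the condensation term $C$ on the right-hand side to play its role (without it one can check on the middle-thirds maps with $E=\{0\}$, $C=\{1\}$ that the point $f_1(1)$ is left uncovered).

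For the set inclusion I would expand $\bigcup_{i}f_i(F)\cup C=\bigcup_i f_i(E)\cup\bigcup_i f_i(O_S)\cup C$ and absorb the two summands of $F$ separately. The part $E$ is covered using sub-self-similarity, $E\subseteq\bigcup_i f_i(E)$. For $O_S$ the structural input is that $S$ is closed under the left shift, which holds by Proposition~\ref{prop2.1} because $\Theta(S)=E$. Concretely, for a word $\omega=\omega_1\cdots\omega_n\in S^*$ with $n\ge 1$ the shifted word $\omega'=\omega_2\cdots\omega_n$ is again a prefix of a sequence in $S$, so $\omega'\in S^*$ (or $\omega'$ is empty), and hence $f_\omega(C)=f_{\omega_1}\bigl(f_{\omega'}(C)\bigr)\subseteq f_{\omega_1}(O_S)$, using $C\subseteq O_S$ to cover the case $n=1$. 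Taking the union over all such $\omega$ and adjoining the empty-word piece $C$ yields $O_S\subseteq\bigcup_i f_i(O_S)\cup C$. Combining the two estimates gives $F\subseteq\bigcup_i f_i(F)\cup C$, as required.

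The step I expect to be the main obstacle is compactness, since $O_S$ is only a countable union of the compact sets $f_\omega(C)$ and such unions need not be closed; I would therefore prove sequential compactness of $F$ by hand. For boundedness, choose a closed ball $B$ with $C\cup E\cup A\subseteq B$ and $f_i(B)\subseteq B$ for every $i$ (available because the $f_i$ are contractions), so that $f_\omega(B)\subseteq B$ and thus $F\subseteq B$. For closedness, let $(x_k)\subseteq F$; if infinitely many $x_k$ lie in $E$, or infinitely many lie in one fixed piece $f_\omega(C)$, a convergent subsequence with limit in $F$ comes directly from the compactness of $E$ or of $f_\omega(C)$. The delicate case is $x_k\in f_{\omega^{(k)}}(C)$ with $|\omega^{(k)}|\to\infty$: then $\mathrm{diam}\,f_{\omega^{(k)}}(B)\le s_{\omega^{(k)}}|B|\to 0$, and since each $\omega^{(k)}$ is a prefix of some $\tau^{(k)}\in S$ the point $\Theta(\tau^{(k)})\in f_{\omega^{(k)}}(A)\subseteq f_{\omega^{(k)}}(B)$ satisfies $d\bigl(x_k,\Theta(\tau^{(k)})\bigr)\to 0$. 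As $\Theta(\tau^{(k)})\in E$ and $E$ is compact, passing to a subsequence along which $\Theta(\tau^{(k)})$ converges forces $x_k$ to converge to the same limit, which lies in $E\subseteq F$. Hence every sequence in $F$ has a subsequence converging within $F$, so $F$ is compact and the proof is complete.
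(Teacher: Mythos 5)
Your proof is correct, and its skeleton is the same as the paper's: compactness via a case split on the lengths of the words attached to the points of the sequence, plus the algebraic inclusion via left-shift invariance of $S$. The differences are in execution, and in each instance your version is tighter. For the unbounded-length case the paper picks auxiliary points $y_n \in f_{\omega_{|_n}}(E)$, shows they are Cauchy, and then argues the common limit lies in $E$ by writing $y_n = f_{\omega_1\cdots\omega_n}(y')$ --- notation that silently treats the words attached to different $x_n$ as nested prefixes of a single infinite string, which was never arranged, and the closing limit argument (the $z_{n_\epsilon}$ step) is muddled. Your device of comparing $x_k$ with $\Theta(\tau^{(k)})$, where $\tau^{(k)} \in S$ extends $\omega^{(k)}$, so that both points lie in $f_{\omega^{(k)}}(B)$ while $\Theta(\tau^{(k)}) \in E$, requires no nesting and finishes cleanly by compactness of $E$. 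Second, the paper deduces compactness from ``bounded and closed,'' i.e.\ Heine--Borel, which is valid in $\mathbb{R}^n$ but not in the general complete metric space $(X,d)$ in which the theorem is set; your sequential-compactness argument works in that generality. Third, you make explicit the convention that the empty word lies in $S^*$, hence $C \subseteq O_S$, and you observe (with the middle-thirds counterexample) that the theorem fails without it; the paper needs the same convention --- its step $x' = f_{\omega_2 \cdots \omega_n}(c) \in O_S$ reads $x' = c$ when $n=1$ --- but never states it. Two minor caveats on your side: your trichotomy in the compactness proof is exhaustive only after the pigeonhole remark that bounded word lengths force some single word to repeat (each $S^n$ being finite), which you leave implicit but the paper spells out; and Proposition~\ref{prop2.1} guarantees shift-invariance for \emph{some} compact $S$ representing $E$, not for every $S$ with $\Theta(S)=E$, so, exactly like the paper, you are implicitly reading the hypothesis as saying that $S$ is the shift-invariant set furnished by that proposition --- the statement is actually false for an arbitrary such $S$.
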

\begin{proof}
To prove that $E\cup O_S$ is an ISSS set, we first show that it is compact. Since $O_S$ is bounded, it suffices to prove that $E \cup O_S$ is closed. Let $\{x_n\}$ be a sequence in $E \cup O_S$ converging to some $x \in X$. If $x_n \in E$ for infinitely many $n$, then there exists a subsequence $\{x_{n_r}\}$ with $x_{n_r} \in E$. Since $E$ is closed, $\lim x_{n_r} = x \in E \subseteq E \cup O_S$. If $x_n \in E$ for only finitely many $n$, i.e., $\{x_n\}$ is eventually in $O_S$, then there exists $n_0$ such that $x_n \in O_S$ for all $n \geq n_0$. Hence, for each $n \geq n_0$, there exists a string $\omega_{|_n} \in S^*$ such that $x_n \in f_{\omega_{|_n}}(C)$.
Let us denote the length of any string $\omega := \omega_1 \omega_2 \dots \omega_k \in S^*$ by $|\omega|=k.$

\textbf{Case 1:} $\liminf |\omega_{|_n}| = k < \infty$. Then there exists a subsequence $\{\omega_{{|_n}_j}\}$ of $\{\omega_{|_n}\}$ such that $\omega_{{|_n}_j} \in S^k$ for all $j$, where $S^k$ is a finite set. Thus, there exists a string $t \in \{\omega_{{|_n}_j} \mid j \in \mathbb{N}\}$ and a further subsequence $\{\omega_{{|_n}_{j_p}}\}$ such that $\omega_{{|_n}_{j_p}} = t$ for all $p$. Then $x_{n_{j_p}} \in f_t(C)$, and since $f_t(C)$ is closed, $\lim x_{n_{j_p}} = x \in f_t(C) \subseteq O_S$.

\textbf{Case 2:} $\liminf |\omega_{|_n}| = \infty$. Then
\[
\begin{aligned}
\operatorname{diam}(f_{\omega_{|_n}}(C) \cup f_{\omega_{|_n}}(E)) &= \rho_{\omega_{|_n}} \operatorname{diam}(C \cup E) \\
&\leq \rho_{\max}^{|\omega_{|_n}|} \operatorname{diam}(C \cup E) \to 0 \quad \text{as } n \to \infty,
\end{aligned}
\]
since $\rho_{\max} = \max\{\rho_i\} < 1$.

Now, for each $n$, choose $y_n \in f_{\omega_{|_n}}(E)$. For positive integers $m$ and $n$,
\[
\begin{aligned}
|y_n - y_m| &\leq |y_n - x_n| + |x_n - x_m| + |x_m - y_m| \\
&\leq \operatorname{diam}(f_{\omega_{|_n}}(C) \cup f_{\omega_{|_n}}(E)) + |x_n - x_m| + \operatorname{diam}(f_{\omega_{|_m}}(C) \cup f_{\omega_{|_m}}(E)).
\end{aligned}
\]
Hence, $\{y_n\}$ is a Cauchy sequence and converges to some $y \in X$. Then,
\[
\begin{aligned}
|x - y| &\leq |x - x_n| + |x_n - y_n| + |y_n - y| \\
&\leq |x - x_n| + \operatorname{diam}(f_{\omega_{|_n}}(C) \cup f_{\omega_{|_n}}(E)) + |y_n - y| \to 0.
\end{aligned}
\]
Thus, $x = y$.

Since $y_n \in f_{\omega_{|_n}}(E)$ for each $n$, there exists $y' \in E$ such that $y_n = f_{\omega_1 \ldots \omega_n}(y')$. Let $\epsilon > 0$. Choose $n_1 \in \mathbb{N}$ such that for all $n \geq n_1$,
\[
|y_n - y| = |f_{\omega_1 \ldots \omega_n}(y') - y| < \epsilon / 2.
\]
Since $\Theta(S) = E$, we have $\lim_{n \to \infty} f_{\omega_1 \ldots \omega_n}(y') = z_{n_\epsilon} \in E$. Choose $n_2 \in \mathbb{N}$ such that for all $n \geq n_2$,
\[
|f_{\omega_1 \ldots \omega_n}(y') - z_{n_\epsilon}| < \epsilon / 2.
\]
Let $n' = \max\{n_1, n_2\}$. Then,
\[
\begin{aligned}
|y - z_{n_\epsilon}| &\leq |f_{\omega_1 \ldots \omega_n}(y') - y| + |f_{\omega_1 \ldots \omega_n}(y') - z_{n_\epsilon}| \\
&< \epsilon / 2 + \epsilon / 2 = \epsilon.
\end{aligned}
\]
Since $E$ is closed, $x = y = \lim z_{n_\epsilon} \in E \subseteq E \cup O_S$. Thus, $E \cup O_S$ is closed, and hence compact.

It remains to show that $E \cup O_S$ satisfies Equation~\eqref{eq4}.

Clearly,
\[
\begin{aligned}
E \cup O_S &\subseteq \bigcup_{i=1}^N f_i(E) \cup O_S \\
&\subseteq \bigcup_{i=1}^N f_i(E \cup O_S) \cup O_S \cup C.
\end{aligned}
\]

Now we show that $O_S \subseteq \bigcup_{i=1}^N f_i(O_S)$. Let $x \in O_S$. Then there exists $\omega_{|_n} \in S^*$ and $c \in C$ such that
\[
x = f_{\omega_{|_n}}(c) = f_{\omega_1 \ldots \omega_n}(c).
\]
Let $x' = f_{\omega_2 \ldots \omega_n}(c) \in O_S$. Then,
\[
x = f_{\omega_1}(x') \in \bigcup_{i=1}^N f_i(O_S).
\]

Thus,
\[
E \cup O_S \subseteq \bigcup_{i=1}^N f_i(E \cup O_S) \cup C.
\]
This completes the proof.
\end{proof}

\begin{theorem}
Let $E$ be an SSS set with $\Theta(S) = E$, and
\[
O_S = \bigcup_{\omega \in S^*} f_{\omega}(C).
\]
Then,
\[
\overline{O}_S = E \cup O_S.
\]
\end{theorem}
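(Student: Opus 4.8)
The plan is to prove the two inclusions separately, using the preceding theorem for one direction and the coding map $\Theta$ for the other. The inclusion $\overline{O}_S \subseteq E \cup O_S$ is immediate: by the previous theorem, $E \cup O_S$ is compact and hence closed, and since $O_S \subseteq E \cup O_S$, taking closures gives $\overline{O}_S \subseteq \overline{E \cup O_S} = E \cup O_S$. So all the work in this direction is already done by the compactness result.

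For the reverse inclusion $E \cup O_S \subseteq \overline{O}_S$, note that $O_S \subseteq \overline{O}_S$ is trivial, so it suffices to show $E \subseteq \overline{O}_S$. First I would fix an arbitrary point $x \in E = \Theta(S)$ and, by Proposition~\ref{prop2.1}, choose a sequence $\omega = \omega_1 \omega_2 \dots \in S$ with $\Theta(\omega) = x$. Since $\omega \in S$, each truncation $\omega_{|_n} = \omega_1 \dots \omega_n$ lies in $S^n \subseteq S^*$. Because $C$ is non-empty, I would pick any $c \in C$; then $f_{\omega_{|_n}}(c) \in f_{\omega_{|_n}}(C) \subseteq O_S$ for every $n$.

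The key step is the convergence $f_{\omega_{|_n}}(c) = f_{\omega_1 \dots \omega_n}(c) \to \Theta(\omega) = x$ as $n \to \infty$. This follows directly from the definition of $\Theta$ together with the remark that the limit defining $\Theta(\omega)$ does not depend on the base point; one simply evaluates the composed contractions at the fixed point $c \in X$ in place of an arbitrary $x$. Consequently $x$ is a limit of points of $O_S$, so $x \in \overline{O}_S$, which establishes $E \subseteq \overline{O}_S$ and hence $E \cup O_S \subseteq \overline{O}_S$.

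Combining the two inclusions yields $\overline{O}_S = E \cup O_S$. I do not anticipate a serious obstacle: the first inclusion is handed to us by the compactness of $E \cup O_S$ proved in the previous theorem, and the only point in the second inclusion that requires genuine care is justifying the base-point-independent convergence of $f_{\omega_{|_n}}(c)$ to $\Theta(\omega)$, which is precisely the content of the remark following Proposition~\ref{prop2.1}.
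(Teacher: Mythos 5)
Your proposal is correct and follows essentially the same route as the paper's own proof: both directions are handled identically, using the closedness of $E \cup O_S$ from the preceding theorem for the inclusion $\overline{O}_S \subseteq E \cup O_S$, and Proposition~\ref{prop2.1} together with the sequence $f_{\omega_1 \cdots \omega_n}(c) \to \Theta(\omega) = x$ for a fixed $c \in C$ for the reverse inclusion. If anything, your explicit appeal to base-point independence of the limit defining $\Theta$ makes the convergence step slightly more careful than the paper's, which states it without comment.
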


\begin{proof}
By the construction, $\overline{O}_S$ is the smallest closed set containing $O_S$. Since $E \cup O_S$ is closed and $O_S \subseteq E \cup O_S$, it follows that $\overline{O}_S \subseteq E \cup O_S$.

To prove the reverse inclusion, let $x \in E\cup O_S$ be any element. If $x\in O_S$, then nothing to prove, if $x \in E$, then by Proposition \ref{prop2.1}, there exists a sequence $\omega = (\omega_1, \omega_2, \ldots) \in S$ such that $x = \Theta(\omega)$. We will construct a sequence $\{x_n\}$ in $O_S$ which converges to $x \in E.$ Fix $c \in C$ and define $x_n = f_{\omega_1 \cdots \omega_n}(c) \in O_S$. Then $x_n \to x \in E$, so $x \in \overline{O}_S$. Thus, $E \cup O_S \subseteq \overline{O}_S$.
\end{proof}
In the following, we construct ISSS sets using the above method and observe other ISSS sets.
 We consider the IFS $\{f_1(x)= (\frac{x}{2}, \frac{y}{2}), f_2= (\frac{x}{2}+ \frac{1}{2}, \frac{y}{2}), f_3= (\frac{x}{2}+ \frac{1}{4}, \frac{y}{2}+ \frac{\sqrt{3}}{4})\}$ with a condensation set $C:=$ a circle as shown in the figures.
 Let $S:= \{1,2\}^{\mathbb{N}}$ and $O_S= \bigcup_{i \in S^*} f_i(C).$ Then, the corresponding ISSS set ($3$rd level) is shown in Figure \ref{Fig1}. We present two examples in Figures~\ref{Fig2} and~\ref{Fig3}, that are ISSS sets but are not of the form $E \cup O_S$; however, in general, such sets need not be compact. In the remainder of the paper, we focus on ISSS sets of the form $E \cup O_S$.
  \begin{figure}[H]
\begin{center}
 \includegraphics[width=8cm, height=6cm]{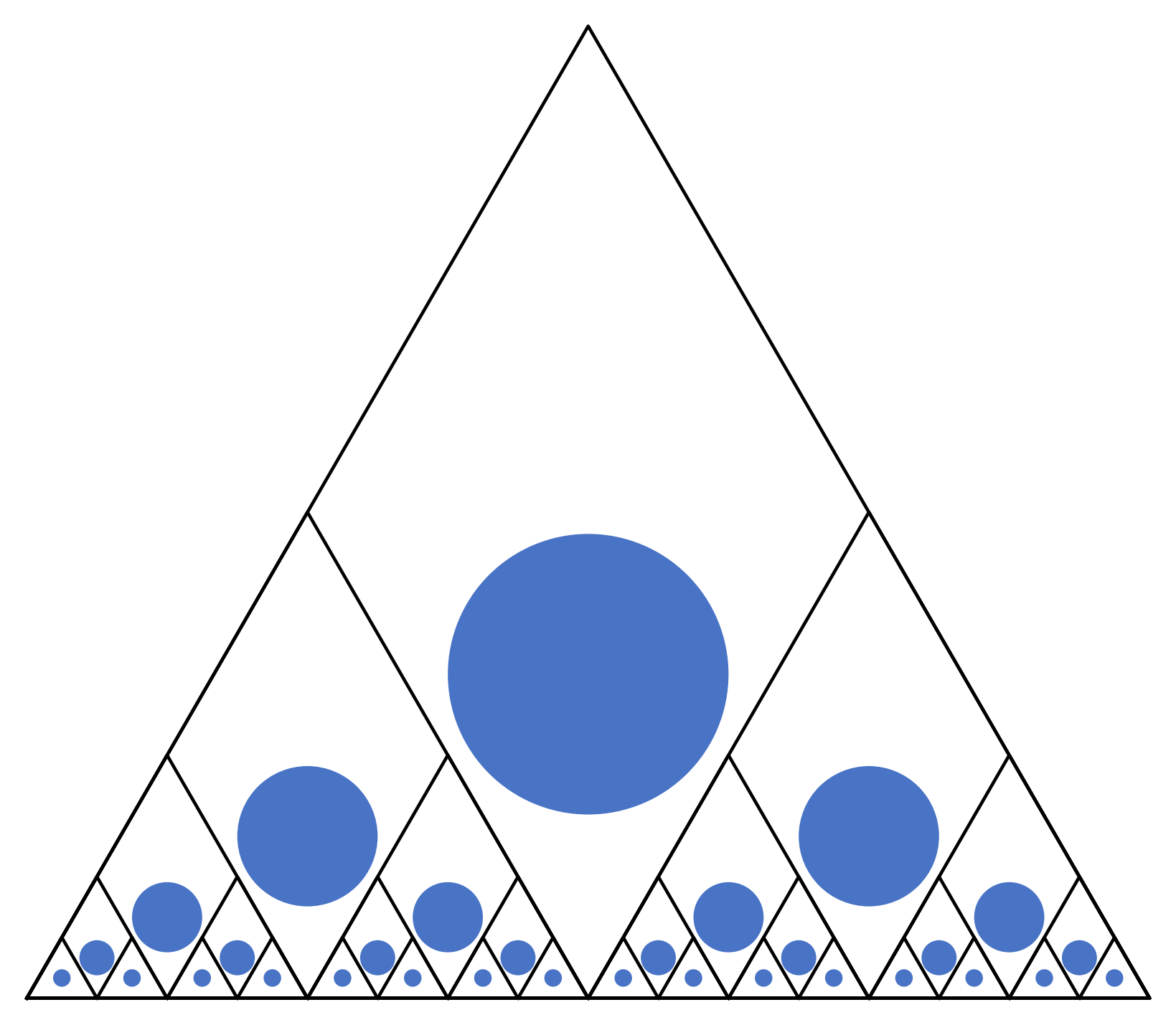}
 \caption{}\label{Fig1}

 \end{center}
 \end{figure}
   \begin{figure}[H]
\begin{center}
 \includegraphics[width=8cm, height=6cm]{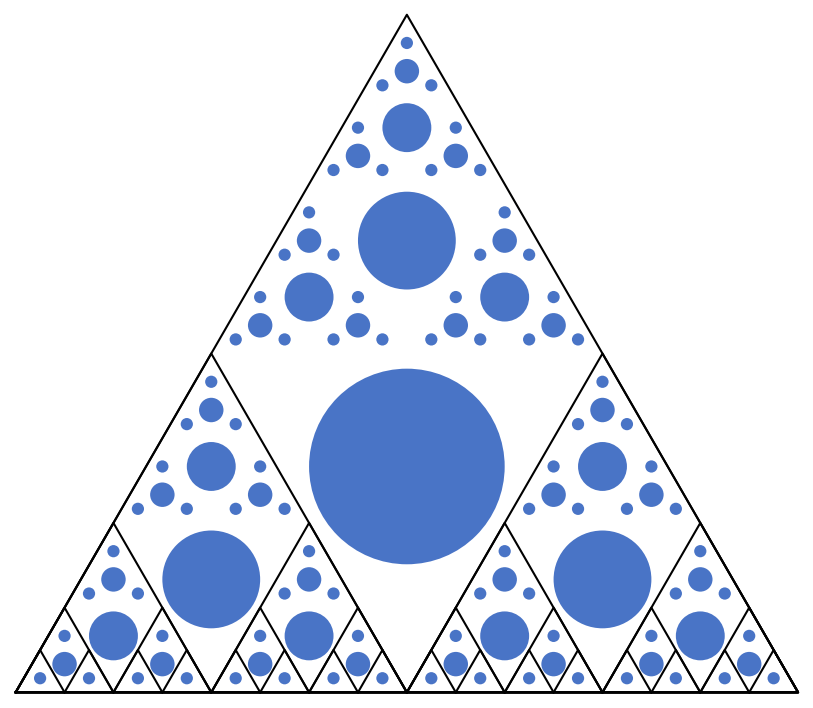}
 \caption{}\label{Fig2}

 \end{center}
 \end{figure}
   \begin{figure}[H]
\begin{center}
 \includegraphics[width=8cm, height=6cm]{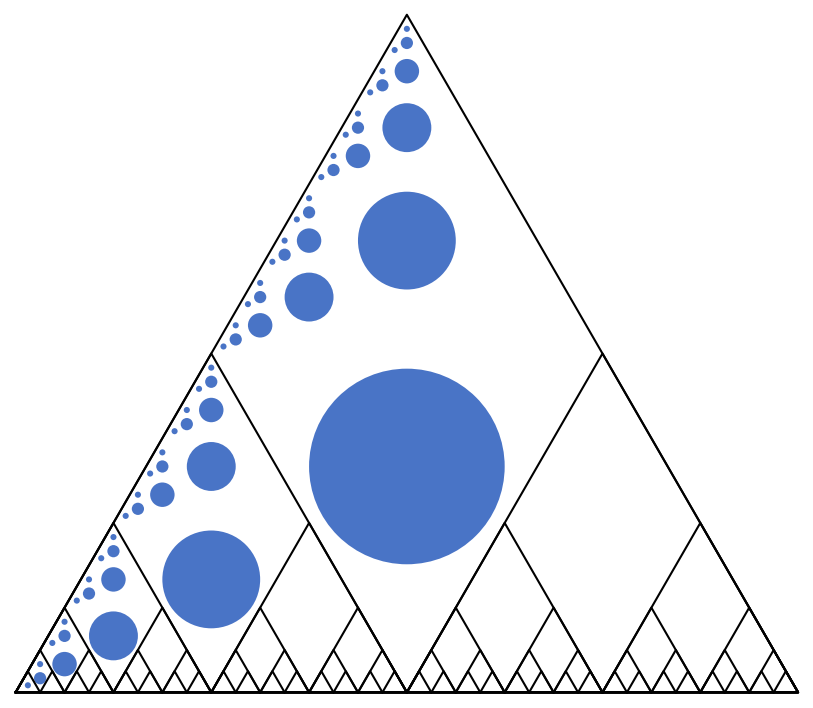}
 \caption{}\label{Fig3}

 \end{center}
 \end{figure}

\section{Dimension of inhomogeneous sub-self-similar sets}
In this section, we aim to estimate the Hausdorff and box dimensions for ISSS sets (obtained from a given SSS set $E$ and a condensation set $C$), which generalize to inhomogeneous self-similar sets. 
\begin{subsection}{Hausdorff dimension}
Let $s$ be the unique non-negative number satisfying $\tau(s) \equiv \lim_{k \to \infty} \bigg(\sum_{i \in S^k} \rho_i^s\bigg)^{1/k}=1$ as defined in Theorem \ref{thm51} and the IFS $\mathcal{I}_C$ satisfies the IOSC, then we have the following corollary.
\begin{corollary}
    Let $ E\cup O_S$ be an ISSS set with SSS set $E$ and condensation set $C.$ Then, $\dim_{H}(E\cup O_S) = \max \{s, \dim_H C\}$.
\end{corollary}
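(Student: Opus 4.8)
The statement to prove is $\dim_H(E \cup O_S) = \max\{s, \dim_H C\}$, and the natural engine is the countable stability of Hausdorff dimension applied to the explicit decomposition of the set. Writing it out, we have
\[
E \cup O_S = E \cup \bigcup_{\omega \in S^*} f_\omega(C),
\]
and since $S^* = \bigcup_n S^n$ is a countable union of finite sets, the right-hand side is a countable union of pieces whose dimensions can be identified one at a time.

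First I would dispose of the condensation pieces. Each $f_\omega$ with $\omega \in S^*$ is a finite composition of the contracting similarities $f_1, \ldots, f_N$, hence is itself a similarity and in particular bi-Lipschitz; since Hausdorff dimension is a bi-Lipschitz invariant, $\dim_H f_\omega(C) = \dim_H C$ for every finite word $\omega$. Next I would invoke Theorem~\ref{thm51} for the piece $E$: under the open set condition it gives $\dim_H E = s$. (Absent the OSC one has only $\dim_H E \le s$, so the asserted equality should be read under the standing OSC hypothesis; the one-sided bound $\dim_H(E \cup O_S) \le \max\{s, \dim_H C\}$ needs nothing beyond the first chain of inequalities in Theorem~\ref{thm51}.)

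With the individual dimensions in hand, countable stability — the identity $\dim_H\big(\bigcup_i A_i\big) = \sup_i \dim_H A_i$ for any countable family — combines them to give
\[
\dim_H(E \cup O_S) = \max\Big\{\dim_H E,\ \sup_{\omega \in S^*} \dim_H f_\omega(C)\Big\} = \max\{s, \dim_H C\},
\]
which is the claim. In particular the lower bound $\dim_H(E \cup O_S) \ge \dim_H C$ is immediate once one notes that any single $f_\omega(C) \subseteq E \cup O_S$ already realizes the dimension $\dim_H C$.

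The point that needs care is the decomposition rather than the computation. One is tempted to observe that $\dim_H O_S = \dim_H C$ (a countable union of similar copies of $C$) and then pass to the closure via the preceding theorem, $\overline{O}_S = E \cup O_S$; but Hausdorff dimension is \emph{not} preserved under closure, so this route would silently discard the contribution of $E$. The correct move is to retain $E$ as an explicit summand in the countable union, so that its dimension $s$ survives the supremum. In this light the earlier closure theorem plays a purely descriptive role — telling us which set the ISSS set actually is — and carries no dimensional content here.
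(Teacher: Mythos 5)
Your proof is correct and takes essentially the same route as the paper's: countable stability of Hausdorff dimension applied to the decomposition $E \cup \bigcup_{\omega \in S^*} f_\omega(C)$, with similarity (bi-Lipschitz) invariance giving $\dim_H f_\omega(C) = \dim_H C$ and Theorem~\ref{thm51} giving $\dim_H E = s$. You are in fact more careful than the paper on two points it glosses over: the open set condition needed for the equality $\dim_H E = s$ (without it only the upper bound $\dim_H(E\cup O_S) \le \max\{s,\dim_H C\}$ survives), and the need to keep $E$ as an explicit summand in the union rather than attempt to recover it through the closure $\overline{O}_S$.
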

\begin{proof}
    Since the Hausdorff dimension is countably stable, the Hausdorff dimension of $O_S$ equals the Hausdorff dimension of $C$. Additionally, the Hausdorff dimension of the union of $E\cup O_S = \max \{s, \dim_H C\}$.
\end{proof}
\end{subsection}
\begin{subsection}{Upper box dimension}
To estimate the box dimension of these ISSS sets, we will follow Fraser's $\delta$-covering technique. For more details, one can see \cite{Fraser1}. 
\begin{lemma}\label{lema52}
    Let $s$ be a unique non-negative number defined in Theorem \ref{thm51}. Then for any $t > s,$ there exists a constant $m_t$ (depends only on $t$) such that 
    $$\sum_{\omega \in S^*}\rho_{\omega}^t = m_t < \infty.$$
\end{lemma}
\begin{proof}
   The function $\tau(h) = \lim_{k \to \infty} \bigg( \sum_{\omega \in S^k} \rho_{\omega}^h \bigg)^{1/k}$ defined in Theorem \ref{thm51} is strictly decreasing continuous function with $0 \le \tau(h) < \infty$ and $\sum_{\omega \in S^k} \rho_{\omega}^h \ge \tau(h)^k$ for all $k \in \mathbb{N}.$ Moreover, $\sum_{\omega \in S^*}\rho_{\omega}^h=\sum_{k=1}^{\infty} \sum_{\omega \in S^k}\rho_{\omega}^h$ converges if $\tau(h) < 1$ and diverges if $\tau(h) >1.$ Hence, for any $t > s$, we have $\sum_{\omega \in S^*}\rho_{\omega}^t=\sum_{k=1}^{\infty} \sum_{\omega \in S^k}\rho_{\omega}^t= m_t < \infty,$ as $\tau$ is decreasing function and $\tau(s)=1.$ This completes the proof. 
\end{proof}
For $\delta \in (0,1],$ defining $\delta$-stopping, for $S$ as 
$S(\delta) = \{ \omega \in S^* : \rho_{\omega} < \delta \le \rho_{\omega_{-}}\},$ where $\omega_{-}$ is the prefix of $\omega$ with $|\omega_{-}| = n-1,$ if $|\omega|= n.$
 In the following lemma, we estimate a bound for $|S(\delta)|.$
 \begin{lemma}\label{lemma53}
     If $t > s,$ then 
    $|S(\delta)| \le m_t \rho_{\min}^{-t} \delta^{-t}$ for all $\delta \in (0,1],$ where $\rho_{\min} = \min_{i=1}^N \{ \rho_i\}$
 \end{lemma}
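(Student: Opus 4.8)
The plan is to convert the defining condition of the $\delta$-stopping set into a uniform lower bound on $\rho_\omega$ for each $\omega \in S(\delta)$, and then play this bound off against the summability of $\sum_{\omega \in S^*}\rho_\omega^t$ furnished by Lemma~\ref{lema52}.

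The first step is the key observation. For any $\omega \in S(\delta)$, write $\omega = \omega_- k$, where $k \in \{1,\dots,N\}$ is its terminal symbol, so that $\rho_\omega = \rho_{\omega_-}\,\rho_k$. The stopping condition gives $\rho_{\omega_-} \ge \delta$, and since $\rho_k \ge \rho_{\min}$ by the definition of $\rho_{\min}$, I obtain the uniform lower bound $\rho_\omega \ge \rho_{\min}\,\delta$, valid for every $\omega \in S(\delta)$. (When $|\omega|=1$ one interprets $\rho_{\omega_-}=1$, and the inequality still holds since $\delta \le 1$.)

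With this in hand, the remainder is a counting estimate. Since $t > s \ge 0$, raising the bound to the power $t$ gives $\rho_\omega^t \ge \rho_{\min}^t\,\delta^t$, so summing over the finite stopping set yields
\[
|S(\delta)|\,\rho_{\min}^t\,\delta^t \;\le\; \sum_{\omega \in S(\delta)} \rho_\omega^t.
\]
Because $S(\delta) \subseteq S^*$ and all summands are nonnegative, the right-hand side is dominated by $\sum_{\omega \in S^*}\rho_\omega^t$, which equals $m_t$ by Lemma~\ref{lema52}. Dividing through by $\rho_{\min}^t\,\delta^t$ then delivers $|S(\delta)| \le m_t\,\rho_{\min}^{-t}\,\delta^{-t}$, as claimed.

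I do not anticipate a genuine obstacle; the argument is essentially a summability/pigeonhole estimate. The one point deserving care is the passage from the stopping inequality $\rho_{\omega_-} \ge \delta$ to $\rho_\omega \ge \rho_{\min}\,\delta$, which is exactly where the uniform lower bound $\rho_{\min}$ on the individual contraction ratios enters. The finiteness of the majorizing series is precisely the content of Lemma~\ref{lema52} and relies only on the hypothesis $t > s$.
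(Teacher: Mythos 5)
Your proof is correct and is essentially identical to the paper's: both derive the pointwise bound $\rho_\omega \ge \rho_{\omega_-}\rho_{\min} \ge \delta\,\rho_{\min}$ from the stopping condition, sum over $S(\delta) \subseteq S^*$, and invoke Lemma~\ref{lema52} to bound the sum by $m_t$. Your explicit treatment of the length-one edge case (interpreting $\rho_{\omega_-}=1$) is a small clarification the paper omits, but the argument is the same.
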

 \begin{proof}
     By the above lemma, we have 
     $m_t =\sum_{\omega \in S^*}\rho_{\omega}^t \ge \sum_{\omega \in S(\delta)}\rho_{\omega}^t.$ Furthermore, since $\rho_\omega \ge \rho_{\omega_{-}} \rho_{\min} \ge \delta \rho_{\min}$ for every $\omega \in S(\delta)$, It follows that
     $$m_t \ge \sum_{\omega \in S(\delta)} \delta^t \rho_{\min}^t = |S(\delta)| \delta^t \rho_{\min}^t.$$
     This completes the proof.
 \end{proof}
 \begin{lemma}
Let $t>s$. Then, for all $\delta \in (0,1)$, we have
 $$|\{\omega \in S^* : \delta \le \rho_\omega\}| \le \frac{\log \delta}{\log \rho_{\max} }\delta^{-t} m_t. $$
 \end{lemma}
 \begin{proof}
     Let $\delta \in (0,1)$ and $\omega \in S^*$ such that $\delta \le \rho_\omega.$ Consequently, $\delta \le \rho_{\max}^{|\omega|},$ which yields $|\omega| \le \frac{\log \delta}{\log \rho_{\max}}.$
     Moreover, we have 
     \begin{equation*}
         \begin{aligned}
             m_t \frac{\log \delta}{\log \rho_{\max}} &\ge \sum_{N \in \mathbb{N}, N < \frac{\log \delta}{\log \rho_{\max}}} m_t \\& \ge \sum_{N \in \mathbb{N}, N < \frac{\log \delta}{\log \rho_{\max}}} \sum_{\omega \in S^N} \rho_{\omega}^{t} \\& \ge \sum_{N \in \mathbb{N}, N < \frac{\log \delta}{\log \rho_{\max}}} \sum_{\omega \in S^N, \delta \le \rho_\omega} \delta^t \\& = |\{\omega \in S^* : \delta \le \rho_\omega\}| \delta^t.
         \end{aligned}
     \end{equation*}
    
 \end{proof}
 \begin{lemma}\label{lemma54}
    Let $\delta \in (0,1].$ Then, we have 
    $$\bigcup_{\omega \in S^*, \rho_\omega < \delta } f_\omega (C) \subseteq \bigcup_{\omega \in S(\delta)} f_\omega(X).$$
 \end{lemma}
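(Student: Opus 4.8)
The plan is to show that each generating set $f_\omega(C)$ appearing on the left is already captured by a single $\delta$-stopping map on the right. Fix $\omega = \omega_1 \cdots \omega_n \in S^*$ with $\rho_\omega < \delta$. I would first record the monotonicity of the contraction ratios along the prefixes of $\omega$: writing $\omega_{|_k} = \omega_1 \cdots \omega_k$ and adopting the convention $\rho_{\omega_{|_0}} = 1$ for the empty prefix (consistent with $\rho_{\omega_-}$ for a one-letter string), the sequence $\rho_{\omega_{|_0}}, \rho_{\omega_{|_1}}, \ldots, \rho_{\omega_{|_n}}$ is strictly decreasing, since each additional factor $\rho_{\omega_k} \in (0,1)$.

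Next, I would locate the stopping prefix. Because $\delta \le 1 = \rho_{\omega_{|_0}}$ while $\rho_{\omega_{|_n}} = \rho_\omega < \delta$, there is a smallest index $k$ with $1 \le k \le n$ for which $\rho_{\omega_{|_k}} < \delta$. Minimality of $k$ forces $\rho_{\omega_{|_{k-1}}} \ge \delta$, so $\tau := \omega_{|_k}$ satisfies $\rho_\tau < \delta \le \rho_{\tau_-}$; that is, $\tau \in S(\delta)$. Here I would also verify that $\tau$, being a prefix of $\omega \in S^*$, again lies in $S^*$: if $\omega = \sigma_{|_n}$ for some $\sigma \in S$, then $\tau = \sigma_{|_k} \in S^k \subseteq S^*$, so $S(\delta)$ is a genuine subset of $S^*$ and the right-hand union is meaningful.

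Finally, I would exploit the factorization $\omega = \tau \eta$ with $\eta = \omega_{k+1} \cdots \omega_n$ (possibly the empty string), which gives $f_\omega = f_\tau \circ f_\eta$. Since $f_\eta$ maps $X$ into itself and $C \subseteq X$, we have $f_\eta(C) \subseteq X$, and therefore $f_\omega(C) = f_\tau(f_\eta(C)) \subseteq f_\tau(X) \subseteq \bigcup_{\nu \in S(\delta)} f_\nu(X)$. Taking the union over all admissible $\omega$ then yields the claimed inclusion.

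I do not anticipate a genuine obstacle here; the argument is essentially the discrete intermediate-value step of truncating a string at its first $\delta$-stopping prefix. The only point demanding care is the bookkeeping at the boundary of the prefix chain, namely handling the empty-prefix convention $\rho_{\omega_{|_0}} = 1$ so that one-letter stopping strings $\tau = \omega_1$ (for which $\tau_-$ is empty) are correctly admitted into $S(\delta)$ through the inequality $\delta \le 1$. The strict monotonicity of the prefix ratios guarantees that the stopping index $k$ exists and is unique, so no case analysis beyond the distinction $k = 1$ versus $k > 1$ is required.
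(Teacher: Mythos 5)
Your proposal is correct and follows essentially the same route as the paper's proof: both truncate $\omega$ at its first prefix whose contraction ratio drops below $\delta$ (using the convention that the empty string has ratio $1$), place that prefix in $S(\delta)$, and conclude via the factorization $f_\omega(C) = f_\tau(f_\eta(C)) \subseteq f_\tau(X)$. The only difference is that you spell out explicitly the monotonicity and minimal-index argument, and the prefix-closure of $S^*$, which the paper compresses into the word ``clearly.''
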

 \begin{proof}
    For any $x \in \bigcup_{\omega \in S^*, \rho_\omega < \delta } f_\omega (C),$ there exists a string $\omega = \omega_1\ldots \omega_n \in S^*$ and $c \in C$ such that $S_\omega(c)= x$ with $\rho_\omega < \delta.$ If we assume that the contraction ratio for empty string is $1,$ then clearly there exists some prefix $\omega'= \omega_1\ldots \omega_m$ of $\omega$ such that $\omega' \in S(\delta)$ and $x = f_{\omega'}(f_{\omega_{m+1}\ldots \omega_n} (c)) \in f_{\omega'} (X).$ This completes the proof.
 \end{proof}
\begin{theorem}
    Let $E_C= E \cup O_S$ be an ISSS set. Then, we have 
    $$\max \{\overline{\dim}_B E,\overline{\dim}_B C\} \le \overline{\dim}_B E_C \le \max \{s,\overline{\dim}_B C \}.$$
\end{theorem}
\begin{proof}
    Since the upper box dimension is finitely stable and  monotonic, we have
    $$\max \{\overline{\dim}_B E,\overline{\dim}_B C\} \le \overline{\dim}_B E_C \le \max \{\overline{\dim}_B E,\overline{\dim}_B O_S\}.$$
    By Theorem \ref{thm51} $\overline{\dim}_B E \le s,$ it is sufficient to show that $\overline{\dim}_B O_S \le \max\{s, \overline{\dim}_B C\}.$
    Let $\max\{s,\overline{\dim}_B C\} < t,$ then by the definition of the box-dimension, there exists a constant $c_t$ such that $$N_\delta(C) \le c_t \delta^{-t},$$ for any $\delta \in (0,1].$ Consider
    \begin{equation*}
        \begin{aligned}
         N_\delta(O_S) &=  N_\delta \bigg( C \cup \bigcup_{\omega \in S^*} f_\omega(C) \bigg) \\& \le N_\delta (C) + N_{\delta} \bigg( \bigcup_{\omega \in S^*, \rho_\omega \ge \delta} f_\omega (C) \bigg) +N_{\delta} \bigg( \bigcup_{\omega \in S^*, \rho_\omega < \delta} f_\omega (C) \bigg) \\& \le N_\delta (C) + N_{\delta} \bigg( \bigcup_{\omega \in S^*, \rho_\omega \ge \delta} f_\omega (C) \bigg) +N_{\delta} \bigg( \bigcup_{\omega \in S(\delta)} f_\omega (X) \bigg) \\& \le N_\delta (C) +  \sum_{\omega \in S^*, \rho_\omega \ge \delta}  N_{\delta}(f_\omega (C) ) + \sum_{\omega \in S(\delta)} N_{\delta}(f_\omega (X)) \\& \le N_\delta (C) +  \sum_{\omega \in S^*, \rho_\omega \ge \delta}  N_{{\delta}/{\rho_\omega}}(C)  + \sum_{\omega \in S(\delta)} N_{{\delta}/{\rho_\omega}} (X) \\& \le c_t \delta^{-t} + \sum_{\omega \in S^*, \rho_\omega \ge \delta}  c_t({\delta}/{\rho_\omega})^{-t} +\sum_{\omega \in S(\delta)} N_1(X) \\& \le  c_t \delta^{-t} + c_t \delta^{-t} \sum_{\omega \in S^*}  {\rho_\omega}^{t} +|S(\delta)| N_1(X) \\& \le  c_t \delta^{-t} + c_t \delta^{-t} m_t + m_t {\rho}^{-t}_{\min} \delta^{-t} N_1(X) \\& \le  (c_t + c_t m_t + m_t {\rho}^{-t}_{\min}  N_1(X))\delta^{-t}.
        \end{aligned}
    \end{equation*}
    The third inequality follows from Lemma \ref{lemma54}. Also, $X$ is a compact space and $\delta / \rho_\omega > 1$, the third inequality is a consequence of Lemma \ref{lemma53}. This completes the proof.  
\end{proof}
\end{subsection}
\subsection{Lower box dimension}
In this subsection, we aim to estimate an upper bound on the lower box dimension of the ISSS sets. We observe that, if $\overline{\dim}_B C = \underline{\dim}_B C$, then the box dimension of ISSS sets is equal to the maximum of the dimension of the corresponding SSS set and the box dimension of the condensation set. Since the lower box dimension follows the monotonicity property, we have
\[
\max \left\{ \underline{\dim}_B E, \underline{\dim}_B C \right\} \leq \underline{\dim}_B (E \cup O_S) \leq \max \left\{ s, \overline{\dim}_B C \right\}.
\]

Given the stability of the lower box dimension under closure, we have 
\[
\underline{\dim}_B (E \cup O_S) = \underline{\dim}_B \overline{O}_S = \underline{\dim}_B O_S.
\]
Now, we use the concept of the covering regularity index of the condensation set $C$ from \cite{Fraser1} to estimate an upper bound for the lower box dimension of the set $O_S$. 

For $t \geq 0$ and $\delta \in (0,1]$, the $(t,\delta)$ covering regularity exponent of $C$ is defined as
\[
P_{t,\delta} (C) = \sup \left\{ p \in [0,1] : N_{\delta^p} (C) \geq \delta^{-pt} \right\}.
\]
The $t$-covering regularity exponent of $C$ is then given by
\[
P_{t} (C) = \liminf_{\delta \to 0} P_{t,\delta}(C).
\]

\begin{lemma}(see ~\cite[Lemma ~ 2.6]{Fraser1})\label{lem1}
    We have
    \begin{enumerate}
        \item For all \(t, \delta > 0\), \(P_{t, \delta}(C)\), \(P_t(C) \in [0, 1]\).
        \item \(P_t(C)\) is decreasing in \(t\), and if \(t < \underline{\dim}_B C\), then \(P_t(C) = 1\), and if \(t > \overline{\dim}_B C\), then \(P_t(C) = 0\).
        \item For all \(t > \underline{\dim}_B C\), we have \(P_t(C) \le \underline{\dim}_B C < t\).
    \end{enumerate}
\end{lemma}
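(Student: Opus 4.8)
The plan is to reduce every instance of the defining inequality to a statement about the single quantity
\[
g(\eta) := \frac{\log N_{\eta}(C)}{-\log \eta},
\]
whose lower and upper limits as $\eta \to 0$ are exactly $\underline{\dim}_B C$ and $\overline{\dim}_B C$. Taking logarithms in $N_{\delta^p}(C) \ge \delta^{-pt}$ and dividing by $-\log \delta^p = -p\log\delta > 0$ shows that, for $p > 0$, this inequality is equivalent to $g(\delta^p) \ge t$. Hence I would record the reformulation
\[
P_{t,\delta}(C) = \sup\{\, p \in [0,1] : g(\delta^p) \ge t \,\},
\]
and, on writing $u = -\log\delta$ and $v = -\log\delta^{p} = pu$, the further normalization $P_{t,\delta}(C) = \tfrac1u \sup\{\, v \in [0,u] : g(e^{-v}) \ge t \,\}$. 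This one identity drives all three parts.

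For part (1) I would first note that $p = 0$ always lies in the defining set, since $N_{\delta^0}(C) = N_1(C) \ge 1 = \delta^0$ for non-empty $C$; thus the set is non-empty and contained in $[0,1]$, so $P_{t,\delta}(C) \in [0,1]$, and $P_t(C)$ inherits the same bounds as a liminf. For the monotonicity in part (2), since $\delta^{-pt} = (\delta^{-p})^{t}$ is increasing in $t$ (because $\delta^{-p} \ge 1$), the solution set $\{p : N_{\delta^p}(C) \ge \delta^{-pt}\}$ shrinks as $t$ grows, so $P_{t,\delta}(C)$ and hence $P_t(C)$ are non-increasing in $t$. For the two limiting regimes I would test the endpoint $p = 1$: if $t < \underline{\dim}_B C$ then $g(\delta) \ge t$ for all small $\delta$, so $p = 1$ is admissible and $P_{t,\delta}(C) = 1$, giving $P_t(C) = 1$; if $t > \overline{\dim}_B C$ then $g(\eta) < t$ for all $\eta$ below some threshold $\eta_0 \in (0,1)$, which confines the solution set to $\{p : \delta^p \ge \eta_0\} = [0, \tfrac{\log\eta_0}{\log\delta}]$, whence $P_{t,\delta}(C) \le \tfrac{\log\eta_0}{\log\delta} \to 0$ and $P_t(C) = 0$.

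The crux, and the step I expect to be the genuine obstacle, is part (3), because it must use the \emph{full} liminf defining $\underline{\dim}_B C$ rather than a single-scale comparison, and because $g$ is typically discontinuous and oscillatory so that no naive continuity argument applies. The plan is to set $M(u) := \sup\{\, v \in [0,u] : g(e^{-v}) \ge t \,\}$, so that $P_t(C) = \liminf_{u\to\infty} M(u)/u$, and to argue by contradiction. Fixing any $c < P_t(C)$, for all large $u$ one has $M(u) > cu$, i.e.\ every window $[0,u]$ contains a scale $v \in (cu,u]$ with $g(e^{-v}) \ge t$. Iterating this starting from one such scale produces $v_0 < v_1 < \cdots \to \infty$ with controlled multiplicative gaps $v_{n+1} \le v_n/c$. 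The decisive ingredient is then the monotonicity of the covering number: since $\log N_{e^{-v}}(C)$ is non-decreasing in $v$, on each interval $[v_n, v_{n+1}]$ one gets $\log N_{e^{-v}}(C) \ge \log N_{e^{-v_n}}(C) \ge t\,v_n \ge tc\,v$, so $g(e^{-v}) \ge tc$ for all large $v$. This forces a lower bound on $\underline{\dim}_B C = \liminf_v g(e^{-v})$ in terms of $t$ and $P_t(C)$, from which the asserted inequality $P_t(C) \le \underline{\dim}_B C$ is read off, the strict inequality $\underline{\dim}_B C < t$ being exactly the hypothesis. The main care needed is in making the iteration and the gap bound quantitatively tight enough to land on $\underline{\dim}_B C$ rather than a weaker bound.
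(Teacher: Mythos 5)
The paper itself gives no proof of this lemma --- it is quoted from Fraser's Lemma 2.6 --- so your proposal can only be judged on its own merits rather than against an argument in the paper. Your reformulation via $g(\eta)=\log N_{\eta}(C)/(-\log\eta)$ and your proofs of parts (1) and (2) are correct and standard. In part (3) the mechanism you identify (the liminf window property combined with monotonicity of covering numbers) is exactly the right one, but note two things. A minor repair: the recursively chosen $v_{n+1}\in(v_n,v_n/c]$ are increasing but need not tend to infinity (they may accumulate at a finite point), so as written your bound $g(e^{-v})\ge tc$ is only established on a bounded interval; in fact no iteration is needed, since for every large $w$ the window property applied with $u=w$ gives a good scale $v\in(cw,w]$, and monotonicity yields $N_{e^{-w}}(C)\ge N_{e^{-v}}(C)\ge e^{tv}\ge e^{tcw}$ at one stroke. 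The major point: what this argument establishes is $\underline{\dim}_B C\ge tc$ for every $c<P_t(C)$, that is,
\[
t\,P_t(C)\;\le\;\underline{\dim}_B C,
\qquad\text{equivalently}\qquad
P_t(C)\;\le\;\frac{\underline{\dim}_B C}{t},
\]
and your closing claim that $P_t(C)\le\underline{\dim}_B C$ ``is read off'' from this is a non sequitur unless $t\ge 1$: for $t\in(\underline{\dim}_B C,1)$ one has $\underline{\dim}_B C/t>\underline{\dim}_B C$, so no such deduction is possible.

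This gap cannot be closed, because the inequality as printed is false for such $t$. Sketch of a counterexample in $\mathbb{R}$: let $C\subset[0,1]$ be a Moran-type compact set whose covering function $h(v):=\log N_{e^{-v}}(C)$ is constant on each interval $[a_k,5a_k]$ and has slope $1$ on $[5a_k,9a_k]$, where $a_{k+1}=9a_k$ and $h(a_k)=a_k/2$ (such sets are routinely constructed; lift the peaks slightly if one wants robustness to bounded multiplicative errors). Then $\underline{\dim}_B C=\liminf_{v\to\infty}h(v)/v=1/10$, while for $t=1/2$ the scales $v$ with $h(v)\ge tv$ are exactly the points $a_k$, so in your notation $P_{1/2}(C)=\liminf_{u\to\infty}M(u)/u=\liminf_k a_k/a_{k+1}=1/9>1/10=\underline{\dim}_B C$, contradicting the printed statement; note that your inequality is respected, since $t\,P_t(C)=1/18\le 1/10$. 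The explanation is that the paper dropped a factor of $t$ in transcribing Fraser's lemma, part (3) of which asserts $P_t(C)\,t\le \underline{\dim}_B C<t$. That corrected form is precisely what your argument proves, and it is also all the paper ever uses: in the subsequent theorem on $\underline{\dim}_B(E\cup O_S)$, Lemma \ref{lem1}(3) is invoked only to guarantee $P_t(C)<1$, which already follows from $P_t(C)\le \underline{\dim}_B C/t<1$. So the accurate verdict is that your proof, with the small repair above, establishes the correct version of the lemma, while the version stated here is not provable.
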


\begin{lemma}(see,\cite[Lemma~ 3.6]{Fraser1})\label{lem2}
    For \(t \ge 0\), if \(P_t(C) < 1\), then for all \(\epsilon \in (0, 1 - P_t(C))\), there exists \(\delta \in (0, \epsilon)\) such that
    \[
    P_t(C) - \epsilon < P_{t, \delta}(C) < P_t(C) + \epsilon,
    \]
    and for all \(\delta_0 \in [\delta, \delta^{P_t(C)}]\), we have
    \[
    N_{\delta_0}(C) \le \delta_0^{-t}.
    \]
\end{lemma}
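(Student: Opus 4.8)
The plan is to extract both displayed conclusions directly from the two pieces of information hidden in the hypothesis $P_t(C) = \liminf_{\delta \to 0} P_{t,\delta}(C) < 1$. First I would fix $\epsilon \in (0, 1 - P_t(C))$, so that $P_t(C) + \epsilon < 1$ and every exponent appearing below stays inside $[0,1]$. The bound $P_t(C) - \epsilon < P_{t,\delta}(C)$ is the ``eventual'' half of the $\liminf$: there is $\eta > 0$ with $P_{t,\delta}(C) > P_t(C) - \epsilon$ for every $\delta \in (0,\eta)$. The bound $P_{t,\delta}(C) < P_t(C) + \epsilon$ is the ``frequent'' half: some sequence $\delta_n \to 0$ realizes the $\liminf$, so infinitely many small $\delta$ satisfy it. Intersecting these, I can pick a single $\delta \in (0, \min\{\eta, \epsilon\})$ with $P_t(C) - \epsilon < P_{t,\delta}(C) < P_t(C) + \epsilon$, which is the first display and also forces $\delta < \epsilon$.

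For the covering estimate I would unwind the supremum in $P_{t,\delta}(C) = \sup\{p \in [0,1] : N_{\delta^p}(C) \ge \delta^{-pt}\}$. Every $p$ lying strictly above $P_{t,\delta}(C)$ fails the defining inequality, so $N_{\delta^p}(C) < \delta^{-pt}$. I then reparametrize the scale by $\delta_0 = \delta^p$; since $0 < \delta < 1$, the map $p \mapsto \delta^p$ is a decreasing bijection sending $p \in [P_t(C), 1]$ onto $\delta_0 \in [\delta, \delta^{P_t(C)}]$, with $\delta_0^{-t} = \delta^{-pt}$. Hence the target $N_{\delta_0}(C) \le \delta_0^{-t}$ is exactly $N_{\delta^p}(C) \le \delta^{-pt}$ for $p \in [P_t(C),1]$, and this follows from the supremum characterization the moment every such $p$ exceeds $P_{t,\delta}(C)$.

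The hard part will be guaranteeing that the chosen $\delta$ satisfies $P_{t,\delta}(C) \le P_t(C)$, so that the whole exponent range $[P_t(C),1]$ sits above the supremum and the strict inequality $N_{\delta^p}(C) < \delta^{-pt}$ applies throughout; the ``frequent'' half alone only yields $P_{t,\delta}(C) < P_t(C) + \epsilon$, leaving the possibly uncontrolled gap $p \in [P_t(C), P_{t,\delta}(C)]$. I would close this gap by choosing $\delta$ from the inner infimizing families in $P_t(C) = \sup_{\eta > 0} \inf_{0 < \delta < \eta} P_{t,\delta}(C)$, each inner infimum being $\le P_t(C)$, which pushes $P_{t,\delta}(C)$ down to (or just below) $P_t(C)$ while keeping it above $P_t(C) - \epsilon$; equivalently, since $\epsilon$ is arbitrary in the intended application, the residual $\epsilon$-slack on the right endpoint of the scale interval is harmless. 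A secondary technical nuisance is that the supremum need not be attained, so I would argue on $p > P_{t,\delta}(C)$ and recover the non-strict bound at the boundary using the monotonicity of $\delta_0 \mapsto N_{\delta_0}(C)$. Combining this with the first paragraph delivers both displayed statements.
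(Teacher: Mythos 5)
Your first paragraph is correct and complete: the ``eventual'' and ``frequent'' halves of the $\liminf$ give exactly the first display together with $\delta<\epsilon$. The genuine gap is the one you flag yourself in your third paragraph, and your proposed repair does not close it. The supremum argument yields $N_{\delta^p}(C)<\delta^{-pt}$ only for $p>P_{t,\delta}(C)$, while the stated interval $[\delta,\delta^{P_t(C)}]$ requires the bound for all $p\ge P_t(C)$; to bridge this you need arbitrarily small $\delta$ with $P_{t,\delta}(C)\le P_t(C)$, and such $\delta$ need not exist. A $\liminf$ need not be attained, nor approached from below: the inequality $\inf_{0<\delta<\eta}P_{t,\delta}(C)\le P_t(C)$ only produces values \emph{close to} $P_t(C)$, not values $\le P_t(C)$, and it is perfectly possible that $P_{t,\delta}(C)>P_t(C)$ for \emph{every} $\delta$. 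This really happens: for a Moran-type compact set whose covering function alternates between full-growth and no-growth phases (so that $\log N_{e^{-x}}(C)$ is piecewise linear with slopes $1$ and $0$ on suitably chosen geometric blocks), one computes with $t=\tfrac12$ that $P_t(C)=\tfrac12$ while $P_{t,\delta}(C)>\tfrac12$ for all $\delta$; moreover, for every $\delta$ satisfying the first display the right endpoint $\delta_0=\delta^{P_t(C)}$ lands inside a dense phase where $N_{\delta_0}(C)>\delta_0^{-t}$. So the statement with exponent $P_t(C)$ is not merely out of reach of your argument --- it appears to be false as printed.

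For context: the paper offers no proof of this lemma at all; it is quoted from Fraser \cite{Fraser1}, and the quotation has altered the statement. Fraser's Lemma~3.6 asserts the covering bound for $\delta_0\in[\delta,\delta^{P_{t,\delta}(C)+\epsilon}]$ (equivalently, up to exponent $P_t(C)+2\epsilon$ by the first display), i.e.\ the exponents $p$ are kept \emph{strictly above} the supremum $P_{t,\delta}(C)$ --- which is exactly what your reparametrization $\delta_0=\delta^p$ proves with no further input, and even your ``boundary'' worry disappears since every such $p$ exceeds the sup strictly. That weaker version is also all that the paper's subsequent theorem uses: there the lemma is applied to the scales $\delta/\rho_\omega$ with $\rho_\omega\ge\delta^{1-P_{t,\delta}(C)-\epsilon}$, which lie in $(\delta,\delta^{P_{t,\delta}(C)+\epsilon}]$. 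So your closing ``fallback'' remark is the correct resolution, not a fallback: restate the lemma with the $\epsilon$-slack in the endpoint and your proof is complete and essentially Fraser's; the attempt to remove the slack and reach the exponent $P_t(C)$ is the part that cannot be made to work.
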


By Lemma \ref{lemma53}, for \(t > s\), the cardinality of the \(\delta\)-stopping set \(S\) is given by
\[
|S(\delta)| \le m_t \rho_{\min}^{-t} \delta^{-t}.
\]
Using the two lemmas above, we can estimate the lower bound on the box dimension.

\begin{theorem}
    We have
    \[
    \underline{\dim}_B (E \cup O_S) \le \max \{s, \underline{\dim}_B C\}.
    \]
\end{theorem}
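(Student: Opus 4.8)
The plan is to exploit the reduction, already recorded above, that $\underline{\dim}_B(E\cup O_S)=\underline{\dim}_B\overline{O}_S=\underline{\dim}_B O_S$, so that it suffices to bound $\underline{\dim}_B O_S$. Fix $t>\max\{s,\underline{\dim}_B C\}$. Since the lower box dimension is a $\liminf$, I need only exhibit a single constant $M_t$ together with a sequence of scales $\delta_k\downarrow 0$ along which $N_{\delta_k}(O_S)\le M_t\,\delta_k^{-t}$; this forces $\underline{\dim}_B O_S\le t$, and letting $t\downarrow\max\{s,\underline{\dim}_B C\}$ then yields the claim. The freedom to select the scales $\delta_k$, rather than having to control every $\delta$, is exactly what makes a lower box estimate feasible and is the feature the covering regularity index is designed to supply.

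First I would reproduce the covering decomposition from the upper box dimension theorem. Writing $O_S=C\cup\bigcup_{\omega\in S^*}f_\omega(C)$, splitting the strings according to whether $\rho_\omega\ge\delta$ or $\rho_\omega<\delta$, applying Lemma~\ref{lemma54} to replace the small copies by the images $f_\omega(X)$ indexed by the $\delta$-stopping $S(\delta)$, and using $N_\delta(f_\omega(C))\le N_{\delta/\rho_\omega}(C)$ for the similarity $f_\omega$, I obtain
\[
N_\delta(O_S)\le N_\delta(C)+\sum_{\omega\in S^*,\ \rho_\omega\ge\delta}N_{\delta/\rho_\omega}(C)+|S(\delta)|\,N_1(X).
\]
By Lemma~\ref{lemma53} the last term is at most $m_t\rho_{\min}^{-t}N_1(X)\,\delta^{-t}$, already of the required order $\delta^{-t}$. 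Thus the whole difficulty is concentrated in the middle sum, where, unlike in the upper box case, I cannot bound $N_{\delta/\rho_\omega}(C)$ by $(\delta/\rho_\omega)^{-t}$ at an arbitrary scale.

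This is where the covering regularity exponent enters. Since $t>\underline{\dim}_B C$, Lemma~\ref{lem1} places us in the regime $P_t(C)<1$ required by Lemma~\ref{lem2}, which then furnishes, for each small $\epsilon>0$, a scale $\delta$ arbitrarily close to $0$ for which the \emph{uniform} estimate $N_{\delta_0}(C)\le\delta_0^{-t}$ holds simultaneously for every $\delta_0$ in the window $[\delta,\delta^{P_t(C)}]$; taking $\epsilon\downarrow 0$ produces the sequence $\delta_k\downarrow 0$. In particular $N_\delta(C)\le\delta^{-t}$ comes for free from the left endpoint of the window. Moreover, for any string with $\rho_\omega\ge\delta^{\,1-P_t(C)}$ the rescaled scale $\delta/\rho_\omega$ lands inside the window, so $N_{\delta/\rho_\omega}(C)\le(\delta/\rho_\omega)^{-t}$; summing these in-window contributions gives $\delta^{-t}\sum_{\omega\in S^*}\rho_\omega^{t}\le m_t\,\delta^{-t}$ by Lemma~\ref{lema52}, again of order $\delta^{-t}$.

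The main obstacle, and the step I expect to be the crux, is the remaining band of strings $\omega$ with $\delta\le\rho_\omega<\delta^{\,1-P_t(C)}$, whose rescaled scales $\delta/\rho_\omega$ lie \emph{above} the window where no uniform covering bound is available. A naive estimate for these, whether the ambient bound $N_{\delta/\rho_\omega}(X)$ or the endpoint value $N_{\delta^{P_t(C)}}(C)\le\delta^{-P_t(C)t}$ combined with the crude count $|\{\omega\in S^*:\rho_\omega\ge\delta\}|\le m_t\delta^{-t}$, overshoots $\delta^{-t}$, so a more careful multi-scale accounting is needed. I would organise this band into generations on which the window property of Lemma~\ref{lem2} is applied recursively at the intermediate scales, thereby trading the coarse rescaled scales for efficient covers at a coarser level; verifying that the aggregate of these generations still contributes only $O(\delta^{-t})$ along the chosen sequence $\delta_k$ is the delicate computation, and it is precisely the purpose for which the covering regularity index was introduced. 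Once this band is controlled, combining the three terms gives $N_{\delta_k}(O_S)\le M_t\,\delta_k^{-t}$, hence $\underline{\dim}_B O_S\le t$, completing the argument.
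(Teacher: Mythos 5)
Your reduction to bounding $\underline{\dim}_B O_S$, your covering decomposition, and your identification of the band of strings with $\delta \le \rho_\omega < \delta^{1-P_t(C)}$ as the crux all coincide with the paper's argument; the problem is that your proposal stops exactly there, and the deferred ``recursive multi-scale accounting'' cannot be carried out. Lemma~\ref{lem2} controls $N_{\delta_0}(C)$ only for $\delta_0$ inside the single window $[\delta, \delta^{P_t(C)}]$ attached to the chosen good scale $\delta$; for the band strings the rescaled scales $\delta/\rho_\omega$ lie above that window, and nothing guarantees those are good scales for $C$ --- the covering regularity exponent exists precisely because they need not be. Worse, the gap is not merely unfilled but unfillable in general: Fraser \cite{Fraser1} constructs inhomogeneous self-similar sets (which are ISSS sets of the present form with $S = I^\infty$, as the remark following this theorem notes) whose lower box dimension is strictly larger than $\max\{s, \underline{\dim}_B C\}$, exactly because the copies $f_\omega(C)$ with $\rho_\omega$ in your band are viewed at relative scales where $C$ has large covering numbers. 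So the band's contribution genuinely is not $O(\delta^{-t})$ for arbitrary compact $C$, and no accounting scheme can make it so without an additional hypothesis such as $P_t(C) = 0$ (which holds, for instance, whenever $\underline{\dim}_B C = \overline{\dim}_B C$).

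You should also know that the paper's own proof fails at the same point, so your diagnosis applies to it as well. The paper handles the band by the naive estimate you rejected: it bounds $N_{\delta/\rho_\omega}(C) \le N_{\delta^{P_{t,\delta}(C)+\epsilon}}(C) \le \delta^{-(P_{t,\delta}(C)+\epsilon)t}$ and multiplies by the count of all strings with $\rho_\omega \ge \delta$, so its final display still carries the factor $\delta^{-(P_t(C)+2\epsilon)t}$ alongside $\delta^{-t}$; the concluding sentence ``letting $\epsilon \to 0$, we get $\underline{\dim}_B O_S \le t$'' simply discards that factor. What the computation actually establishes is $\underline{\dim}_B(E \cup O_S) \le t\,(1 + P_t(C))$ for every $t > \max\{s, \underline{\dim}_B C\}$, which is the form of Fraser's genuine theorem and which reduces to the stated inequality only when $P_t(C) = 0$. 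In short: your instinct that the naive estimate overshoots $\delta^{-t}$ is correct, the crux you isolated is the real crux, but neither your plan nor the paper's proof can yield the theorem as stated, because the statement itself is false without further assumptions on the condensation set.
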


\begin{proof}
    Let \(t > \max\{s, \underline{\dim}_B C\}\). Then, by Lemma \ref{lem1}, we have \(P_t(C) \le \underline{\dim}_B C < t\). Thus, by Lemma \ref{lem2}, for all \(\epsilon \in (0, 1 - P_t(C))\), there exists \(\delta \in (0, \epsilon)\) such that:
   $$ P_t(C) - \epsilon < P_{t, \delta}(C) < P_t(C) + \epsilon.$$
    
    Moreover, for all \(\delta_0 \in [\delta, \delta^{P_t(C)}]\), we have
    \[
    N_{\delta_0} \le \delta_0^{-t}.
    \]
Let $\epsilon \in (0, 1 - P_t(C))$ be fixed and choose $\delta \in (0, \epsilon)$, we have
\[
\begin{aligned}
    N_{\delta}(O_S) &= N_{\delta}\left(C \cup \bigcup_{\omega \in S^*} f_\omega(C)\right) \\
    &\le \sum_{\omega \in S^*, \delta^{1 - P_{t, \delta}(C) - \epsilon} \le \rho_\omega < 1} N_{\delta} f_\omega(C) + \sum_{\omega \in S^*, \delta \le \rho_\omega < \delta^{1 - P_{t, \delta}(C) - \epsilon}} N_{\delta} f_\omega(C) \\
    &\quad \hspace{4.5cm}+ N_{\delta} \left(\bigcup_{\omega \in S^*, \rho_\omega < \delta} f_\omega(C)\right) + N_{\delta}(C) \\
    &\le \sum_{\omega \in S^*, \delta^{1 - P_{t, \delta}(C) - \epsilon} \le \rho_\omega < 1} N_{\delta / \rho_\omega}(C) + \sum_{\omega \in S^*, \delta \le \rho_\omega < \delta^{1 - P_{t, \delta}(C) - \epsilon}} N_{\delta / \rho_\omega}(C) \\
    &\quad \hspace{4.4cm}+ N_{\delta} \left( \bigcup_{\omega \in S(\delta), \rho_\omega < \delta} f_\omega(X) \right) + N_{\delta}(C) \\
    &\le \sum_{\omega \in S^*, \delta^{1 - P_{t, \delta}(C) - \epsilon} \le \rho_\omega < 1} (\delta / \rho_\omega)^{-t} + \sum_{\omega \in S^*, \delta \le \rho_\omega < \delta^{1 - P_{t, \delta}(C) - \epsilon}} N_{\delta^{P_{t, \delta}(C) + \epsilon}}(C) \\
    &\quad \hspace{4.5cm}+ \sum_{\omega \in S(\delta)} N_{\delta / \rho_\omega}(X) + \delta^{-t} \\
    &\le \sum_{\omega \in S^*, \delta^{1 - P_{t, \delta}(C) - \epsilon} \le \rho_\omega < 1} (\delta / \rho_\omega)^{-t} + \sum_{\omega \in S^*, \delta \le \rho_\omega < \delta^{1 - P_{t, \delta}(C) - \epsilon}} N_{\delta / \rho_\omega}(C) \\
    &\quad \hspace{4.5cm}+ N_1(X) |S(\delta)| + \delta^{-t} \\
    &\le \sum_{\omega \in S^*} (\delta / \rho_\omega)^{-t} + |\{\omega \in S^* : \delta \le \rho_\omega\}| \delta^{-(P_{t, \delta}(C) + \epsilon)t} \\
    &\quad \hspace{4.5cm}+ N_1(X) m_t \rho_{\min}^{-t} \delta^{-t} + \delta^{-t} \\
    &\le \delta^{-t} \sum_{\omega \in S^*} \rho_\omega^t + |\{\omega \in S^* : \delta \le \rho_\omega\}| \delta^{-(P_{t, \delta}(C) + \epsilon)t} \\
     \end{aligned}\]
    \[ \begin{aligned}
    &\quad \hspace{4.5cm}+ N_1(X) m_t \rho_{\min}^{-t} \delta^{-t} + \delta^{-t} \\
    &\le \left( m_t + N_1(X) m_t \rho_{\min}^{-t} + 1 + m_t \frac{\log \delta}{\log \rho_{\max}} \delta^{-(P_{t, \delta}(C) + \epsilon)t} \right) \delta^{-t} \\
    &\le \left( m_t + N_1(X) m_t \rho_{\min}^{-t} + 1 + m_t \frac{\log \delta}{\log \rho_{\max}} \delta^{-(P_{t}(C) + 2\epsilon)t} \right) \delta^{-t}.
\end{aligned}
\]
Letting \(\epsilon \to 0\), we get \(\underline{\dim}_B O_S \le t\). This concludes \(\underline{\dim}_B (E \cup O_S) \le \max\{s, \underline{\dim}_B C\}\).

\end{proof}
\begin{remark}
Notice that if we take $S = I^{\infty}$ (the code space), then the given sub-self-similar set reduces to the self-similar set associated with the underlying IFS. In this case, the above dimension estimation yields the Hausdorff and box dimensions of the inhomogeneous self-similar set associated with the condensation set $C$.
\end{remark}

\section{Continuity of the Hausdorff dimension of ISSS}
In this section, we show that for each ISSS set of the form  $F:= E \cup O_S$, where $E$ is an SSS set and $C$ is the condensation set, there exists a sequence of inhomogeneous IFSs $\{I_k\}$ with corresponding ISSS sets $\{F_k\}$ such that $F = \bigcup_{k \in \mathbb{N}} F_k$. Furthermore, we discuss the continuity of the Hausdorff dimension of ISSS sets.
 
\begin{theorem} \label{thm000}
Let $F = E \cup O_S$ be an ISSS set corresponding to an IFS $\{f_i\}_{i=1}^N$ with condensation set $C$, that satisfies the open set condition. Then, there exists a sequence of IFSs $\{I_k\}_{k=1}^{\infty}$ such that
\[
F = \bigcup_{k=1}^{\infty} F_k,
\]
where $F_k = E_k \cup O_S$ are the ISSS sets corresponding to the IFSs $I_k$, each with condensation set $C$.
\end{theorem}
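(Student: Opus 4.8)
The plan is to carry out the decomposition entirely on the symbolic side, using the representation $E = \Theta(S)$ from Proposition~\ref{prop2.1}, where $S \subseteq I^\infty$ is compact and closed under the left shift $\sigma$. The guiding observation is that the condensation part $O_S = \bigcup_{\omega \in S^*} f_\omega(C)$ is common to every $F_k$, so that for any family of sub-self-similar sets $E_k = \Theta(S_k)$ one automatically has
\[
\bigcup_{k=1}^\infty F_k = \bigcup_{k=1}^\infty \big(E_k \cup O_S\big) = \Big(\bigcup_{k=1}^\infty E_k\Big)\cup O_S .
\]
Hence the whole theorem reduces to producing an increasing family of shift-invariant sets $S_1 \subseteq S_2 \subseteq \cdots \subseteq S$ with $\bigcup_k S_k = S$, together with finite IFSs $I_k$ whose associated sub-self-similar sets are exactly $E_k = \Theta(\overline{S_k})$; for then, since $\Theta$ commutes with unions and $\overline{S_k}\subseteq S$,
\[
\bigcup_{k=1}^\infty E_k = \bigcup_{k=1}^\infty \Theta(\overline{S_k}) = \Theta\Big(\bigcup_{k=1}^\infty S_k\Big) = \Theta(S) = E ,
\]
and therefore $\bigcup_k F_k = E \cup O_S = F$.

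First I would fix the subsystems. For each $k$ let $\mathcal{L}_k(S)$ denote the set of admissible words of length $k$ occurring in sequences of $S$, let $I_k$ be the finite IFS $\{f_\omega : \omega \in \mathcal{L}_k(S)\}$ with condensation set $C$, and let $S_k$ be the shift-invariant subset of $S$ generated by concatenating such admissible blocks. Each $\overline{S_k}$ is compact and closed under $\sigma$, so by Proposition~\ref{prop2.1} the set $E_k = \Theta(\overline{S_k})$ is a sub-self-similar set for $I_k$, and by the earlier construction theorem $F_k = E_k \cup O_S$ is the corresponding ISSS set with condensation $C$. Since $\overline{S_k}\subseteq S$ we have $E_k \subseteq E \subseteq F$ and $O_S \subseteq F$, giving $\bigcup_k F_k \subseteq F$ at once; the reverse inclusion is precisely the symbolic identity displayed above.

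The main obstacle is the exhaustion $\bigcup_k S_k = S$: one must approximate the subshift $S$ from the inside by the tractable pieces $S_k$ while keeping each $E_k$ a genuine self-similar set with a computable dimension. This is where the open set condition enters — it is needed to guarantee that each finite subsystem $I_k$ inherits the open set condition, so that $\dim_H E_k$ is obtained from the appropriate zero of the associated pressure and the sequence $\dim_H E_k$ increases to $s = \dim_H E$. I expect the delicate point to be the regime in which $S$ carries ``generic'' sequences (for instance $S = I^\infty$, where $E$ is the full self-similar attractor): there no proper subshift can capture such a sequence, so the $S_k$ must be taken shift-invariant but not closed, and the closures $\overline{S_k}$ must be controlled so that $\Theta(\overline{S_k})$ neither overshoots $E$ nor collapses onto $E$ prematurely. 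Reconciling the exact equality $\bigcup_k S_k = S$ with the requirement that the $E_k$ be nontrivial, dimension-computable self-similar sets is the crux; once it is settled, countable stability of the Hausdorff dimension, $\dim_H F = \sup_k \dim_H F_k$, delivers the continuity statement pursued in this section.
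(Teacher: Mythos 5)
Your proposal does not prove the theorem: it reduces everything to the exhaustion $\bigcup_k S_k = S$ by suitable shift-invariant sets $S_k \subseteq S$, and then explicitly leaves that step unresolved (``the crux; once it is settled\ldots''), so what you have is a plan with the decisive step missing. Worse, the step fails for the objects you define. You take $S_k$ to be ``the shift-invariant subset of $S$ generated by concatenating admissible $k$-blocks'', but concatenations of admissible blocks need not lie in $S$ at all: if $S \subseteq \{1,2\}^\infty$ is the compact, shift-invariant set of sequences containing at most one symbol $2$, then $2 \in S^1$, yet no sequence beginning $22$ belongs to $S$. So either you let $S_k$ be the full concatenation shift over the alphabet $S^k$ --- and then $S_k \not\subseteq S$, $E_k$ is the attractor of $I_k$, and your inclusion $E_k \subseteq E$ (hence $\bigcup_k F_k \subseteq F$) breaks --- or you intersect with $S$, in which case $S_k = S$ for every $k$ (each $\omega \in S$ is the concatenation of its own $k$-blocks, and each such block lies in $S^k$ by shift-invariance), so the decomposition collapses to the trivial one $E_k = E$, and the property this section actually needs, namely $\dim_H E_k = s_k$ where $\sum_{\omega \in S^k} \rho_\omega^{s_k} = 1$ (Lemma \ref{prop90}), is lost. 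There is no middle ground in your construction that secures both $S_k \subseteq S$ and the dimension-computability of $E_k$; that is precisely the tension you sensed but did not resolve.

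For comparison, the paper argues in the opposite direction: it sets $I_k = \{f_\omega : \omega \in S^k\}$, obtains $F \subseteq \bigcup_k F_k$ as the easy inclusion by regrouping the code of any $x \in E$ into $k$-blocks (so in fact $E \subseteq E_k$ for every $k$), and for the reverse inclusion asserts that any concatenation of blocks from $S^k$ lies in $S$, whence $E_k \subseteq E$. That assertion is exactly the property whose failure is exhibited by the example above: closure under the left shift does not make $S$ closed under concatenation of its admissible blocks, so the paper's converse step implicitly requires an additional hypothesis on $S$ (e.g.\ that $S$ is, for each $k$, the full shift over its $k$-blocks). Your instinct about where the difficulty lies is therefore sound --- it is the same point on which the paper's own argument is fragile --- but identifying the obstacle is not the same as overcoming it, and as written your proposal establishes neither inclusion of $F = \bigcup_k F_k$ for a nontrivial choice of the sets $E_k$.
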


\begin{proof}
Let $S^k :=\{\omega = \omega_1 \omega_2\ldots \omega_k : \exists \omega_1 \ldots \omega_n\ldots \in S\}$, and
let $I_k := \{ f_{\omega} : \omega \in S^k \}$ be IFS with corresponding ISSS set $F_k$ for each $k \in \mathbb{N}$. Then, for any $ x \in F = E \cup O_S$, we consider the following two cases:

\textbf{Case 1:} If $ x \in E$, then there exists a string $ \omega = \omega_1 \omega_2 \ldots \in S$ such that
\[
x = \lim_{n \to \infty} f_{\omega_1} \circ f_{\omega_2} \circ \ldots \circ f_{\omega_n} (z).
\]

For each $k \in \mathbb{N}$, we rewrite $\omega$ as $\omega:= (\omega_1 \omega_2 \ldots \omega_k)|_{0}(\omega_{k+1}\omega_{k+2}\ldots \omega_{2k})|_{1}\ldots (\omega_{jk+1}\omega_{jk+2}\ldots \\ \omega_{(j+1)k})|_{j}\ldots\in S.$ Since, $S$ is closed under left shift, it follows that for each $j= 0,1,\ldots$ each block $\omega_j:=\omega_{jk+1} \ldots \omega_{(j+1)k} \in S^k$. Consequently, we obtain
\[
x = \lim_{j \to \infty} f_{\omega_0} \circ f_{\omega_1} \circ \ldots \circ f_{\omega_j} (z),
\]
and hence $x \in \bigcup_{k=1}^{\infty} F_k$.

\textbf{Case 2:} If $x \in O_S$, then there exists some $\omega = \omega_1 \omega_2 \ldots \omega_n \in S^*$ and $c \in C$ such that
\[
f_{\omega_1} \circ \ldots \circ f_{\omega_n}(c) = x.
\]
Clearly, $\omega_1 \ldots \omega_n \in S^n$. Hence, we have $x \in \bigcup_{k=1}^{\infty} F_k \cup O_S$.

Conversely, suppose $x \in \bigcup_{k=1}^{\infty} F_k \cup O_S$. If $x\in O_S$, then the claim holds trivially. Now, suppose $x \in F_k$ for some $k$, then there exists a sequence $\{\omega_j\}_{j=1}^{\infty}$ in $S^k$ such that
\[
x = \lim_{j \to \infty} f_{\omega_1} \circ f_{\omega_2} \circ \ldots \circ f_{\omega_j} (z).
\]
Using the fact that $\Theta(S) = E$ and that the concatenation $\omega = \omega_1 \omega_2 \ldots \omega_j\ldots \in S$, we conclude that $x \in F$.

Thus, we obtain $F = \bigcup_{k=1}^{\infty} F_k$.
\end{proof}
\begin{note}
    It is worth noting that, if the underlying IFS satisfies the OSC, then the IFS $I_k$ also satisfy the OSC for each $k\in \mathbb{N},$ and $\{E_k\}_{k=1}^{\infty}$ is a monotonic increasing sequence. If $x \in E_k,$ then there exists a sequence $\{\omega_j\}_{j=1}^{\infty}$ in $S^{k}$ such that $\lim_{j \to \infty} f_{\omega_1}\circ f_{\omega_2}\circ \cdots \circ f_{\omega_j}(z)=x.$
    By the fact that $x \in F$ and $\Theta(S) = E,$  the concatenation $\Omega :=\omega_1\ldots \omega_j\ldots \in S.$ Since, $\omega_j \in S^{k},$ by our construction there exist $\omega^j \in S$ such that $\omega^j|_{k}= \omega_j := \omega^j_1\ldots \omega^{j}_k$ for each $j\in \mathbb{N}.$ Now, we rewrite $\Omega=(\omega^1_1\ldots\omega^1_{k} \omega^2_{1})(\omega^2_2 \ldots \omega^2_{k}\omega^3_{1}\omega^3_{2})\ldots$, then clearly each block is of length $k+1$ and an element of $S^{k+1}.$ This implies that $x \in E_{k+1}.$
\end{note}
\begin{lemma}{\cite{Liu}}\label{prop90}
    Let $S \subseteq l^{\infty}$ be any shift-invariant set in code space  $I^{\infty}.$ Then 
 $$\dim_{H} S = \dim_B S = \lim_{k \to \infty} s_k,$$
 where the relation $\sum_{\omega \in S^k} (\rho_\omega)^{s_k}=1$ determines $s_k$ uniquely.
 
\end{lemma}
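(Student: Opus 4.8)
The plan is to identify the limit $\lim_{k\to\infty} s_k$ with the exponent $s$ appearing in Theorem~\ref{thm51}, and then to sandwich both dimensions between a lower Hausdorff bound and an upper box bound, both equal to $s$. Throughout I would use that $d_\infty$ is an ultrametric on $I^\infty$, so that the ball structure is generated by cylinders: writing $[\omega] = \{\tau \in S : \tau|_{|\omega|} = \omega\}$ for $\omega \in S^*$, one has $\operatorname{diam}[\omega] \le \rho_\omega$, and every ball in $S$ coincides with such a cylinder. This reduces all covering and mass-distribution estimates to statements about $\Phi_k(t) := \sum_{\omega \in S^k}\rho_\omega^t$.

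First I would record the submultiplicativity $\Phi_{k+l}(t) \le \Phi_k(t)\,\Phi_l(t)$, which follows from shift-invariance: every $\omega \in S^{k+l}$ factors uniquely as its length-$k$ prefix (in $S^k$) concatenated with the length-$l$ block obtained after applying the shift $\sigma^k$ (in $S^l$ by shift-invariance), while $\rho_\omega$ factors accordingly. By Fekete's lemma the limit $\tau(t) = \lim_k \Phi_k(t)^{1/k} = \inf_k \Phi_k(t)^{1/k}$ exists, so $\Phi_k(t) \ge \tau(t)^k$ for all $k$, and $\tau$ is continuous and strictly decreasing as in Theorem~\ref{thm51}. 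Let $s$ solve $\tau(s) = 1$. Since $s_k$ is defined by $\Phi_k(s_k) = 1$ and $\Phi_k$ is strictly decreasing in $t$, the inequality $\Phi_k(s) \ge \tau(s)^k = 1 = \Phi_k(s_k)$ gives $s_k \ge s$ for every $k$; and for any $\epsilon > 0$ strict monotonicity of $\tau$ yields $\tau(s+\epsilon) < 1$, hence $\Phi_k(s+\epsilon) \to 0$ and $\Phi_k(s+\epsilon) < 1 = \Phi_k(s_k)$ for large $k$, i.e. $s_k < s+\epsilon$ eventually. Therefore $\lim_{k\to\infty} s_k = s$, and it remains to prove $\dim_H S = \dim_B S = s$.

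For the upper bound I would cover $S$ by the $\delta$-stopping cylinders: for each $\delta \in (0,1]$ the sets $\{[\omega] : \omega \in S(\delta)\}$ cover $S$ and each has diameter $\rho_\omega < \delta$, so $N_\delta(S) \le |S(\delta)|$. By Lemma~\ref{lemma53}, for any $t > s$ we have $|S(\delta)| \le m_t\rho_{\min}^{-t}\delta^{-t}$, whence $\overline{\dim}_B S \le t$; letting $t \downarrow s$ gives $\overline{\dim}_B S \le s$. Combined with the universal inequalities $\dim_H S \le \underline{\dim}_B S \le \overline{\dim}_B S$, everything is pinned to $s$ once the matching lower bound $\dim_H S \ge s$ is established.

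The lower bound is the crux, and I would obtain it through the mass distribution principle: it suffices to construct a Borel probability measure $\mu$ on $S$ such that for every $\epsilon > 0$ there is $c_\epsilon > 0$ with $\mu([\omega]) \le c_\epsilon \rho_\omega^{\,s-\epsilon}$ for all $\omega \in S^*$, since in the ultrametric setting this forces $H^{s-\epsilon}(S) > 0$ and hence $\dim_H S \ge s - \epsilon$. The natural candidates are the level-$m$ weightings $\mu_m([\omega]) = \rho_\omega^{s_m}$ on $\omega \in S^m$ (which sum to $1$); I would pass to a weak-$*$ limit $\mu$ of the $\mu_m$ along a subsequence, using compactness of the space of probability measures on the compact set $S$, and then control $\mu([\omega])$ for a fixed cylinder by factoring $\mu_m([\omega]) = \rho_\omega^{s_m}\sum_v \rho_v^{s_m}$ over admissible continuations $v$ and comparing with $\Phi_{m-|\omega|}(s_m)$. \emph{The main obstacle is exactly this verification}: a general shift-invariant $S$ need not be closed under concatenation, so the weights are only submultiplicative and the crude estimate $\Phi_{m-|\omega|}(s_m) \le \Phi_1(s)^{m-|\omega|}$ can blow up; the delicate point is to normalise the approximating measures (or replace them by Ces\`aro/shift-averaged, hence nearly $\sigma$-invariant, measures) so that the limit satisfies a genuine Frostman bound with exponent approaching $s$. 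Once such a $\mu$ is produced, the mass distribution principle closes the argument and yields the chain $\dim_H S = \underline{\dim}_B S = \overline{\dim}_B S = s = \lim_k s_k$.
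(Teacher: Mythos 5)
Your framework is sound as far as it goes, and it is worth noting that the paper itself gives no proof of this lemma at all (it is quoted from Liu \cite{Liu}), so your attempt has to stand entirely on its own. The parts you actually carry out are correct: the submultiplicativity $\Phi_{k+l}(t)\le\Phi_k(t)\Phi_l(t)$ of $\Phi_k(t)=\sum_{\omega\in S^k}\rho_\omega^t$ does follow from $\sigma(S)\subseteq S$, Fekete's lemma then gives $\tau(t)=\inf_k\Phi_k(t)^{1/k}$, and your monotonicity argument correctly yields $s\le s_k$ for all $k$ and $s_k<s+\epsilon$ eventually, hence $\lim_k s_k=s$; likewise the covering of $S$ by the $\delta$-stopping cylinders together with Lemma~\ref{lemma53} gives $\overline{\dim}_B S\le s$, and your observation that in the ultrametric $d_\infty$ any set of diameter $d$ lies in a cylinder $[\omega]$ with $\rho_\omega\le d$ correctly reduces the lower bound to a Frostman estimate on cylinders.

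The genuine gap is the one you flag yourself: the inequality $\dim_H S\ge s$, which is the entire substance of the lemma, is never proved. Your plan (weak-$*$ limits of the level-$m$ measures $\mu_m([\omega])=\rho_\omega^{s_m}$, then the mass distribution principle) founders exactly where you say it does: since a shift-invariant $S$ satisfies only $\sigma(S)\subseteq S$ and need not be closed under concatenation, one only gets $\mu_m([\omega])\le\rho_\omega^{s_m}\Phi_{m-|\omega|}(s_m)$, and $\Phi_j(s)$ admits no uniform upper bound in $j$ (submultiplicativity with $\tau(s)=1$ gives the lower bound $\Phi_j(s)\ge 1$; the generic upper bound $\Phi_1(s)^j$ diverges). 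Saying one should ``normalise'' the approximating measures or pass to Ces\`aro/shift-averaged ones is a direction, not an argument: carrying it out is essentially proving a weighted variational principle. Indeed, in the uniform-ratio case this lemma is precisely Furstenberg's theorem that a closed shift-invariant subset of the full shift has Hausdorff dimension equal to its box dimension (equivalently, to its topological entropy over $\log N$), and every known proof requires a substantive additional idea --- e.g.\ producing a $\sigma$-invariant measure on $S$ whose entropy-to-Lyapunov-exponent ratio approaches $s$ and then applying Shannon--McMillan--Breiman/Billingsley, or Falconer's and Liu's own constructions. Without that step, your argument establishes only $\dim_H S\le\underline{\dim}_B S\le\overline{\dim}_B S\le s=\lim_k s_k$, i.e.\ the easy half of the statement.
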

\begin{corollary}
   Let $F= E \cup O_S$ be the ISSS set corresponding to the IFS $\{f_i\}_{i=1}^N.$ Then, there exists a sequence of IFSs $\{I_k\}_{k=1}^{\infty}$ with ISSS sets $\{F_k\}$ and under IOSC $\dim_H E= \lim_{k \to \infty} \dim_H E_k = \lim_{k \to \infty} s_k,$ where $s_k$ is the Hausdorff dimension of $F_k$ defined in Lemma \ref{prop90}.
\end{corollary}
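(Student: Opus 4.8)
The plan is to read the displayed chain of equalities (correcting the evident typographical slip in its middle term) as the single assertion that $\dim_H E$, $\lim_{k\to\infty}\dim_H E_k$ and $\lim_{k\to\infty} s_k$ all coincide with the similarity value $s$ of Theorem \ref{thm51}, and to establish this by feeding the decomposition $F=\bigcup_k F_k$ of Theorem \ref{thm000} into Liu's formula (Lemma \ref{prop90}) and Falconer's dimension theorem (Theorem \ref{thm51}) under the open set condition. First I would fix the set-up: each $I_k=\{f_\omega:\omega\in S^k\}$ is a finite IFS indexed by the length-$k$ words of $S$, and $E_k$ is the sub-self-similar set of $I_k$ occurring in $F_k=E_k\cup O_S$.

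For the first equality I would use the identification of code spaces already implicit in the proof of Theorem \ref{thm000}. The admissible codes of $E_k$ over the alphabet $S^k$ are exactly the block concatenations $\omega_0|_{k}^{k}\,\omega_1|_{k}^{k}\cdots$ whose underlying sequence lies in $S$; this family is invariant under the block shift precisely because $S$ is closed under the left shift $T$, the block shift being $T^k$. Since the coding map $\Theta_k$ of $I_k$ satisfies $\Theta_k(\omega_0|_{k}^{k}\,\omega_1|_{k}^{k}\cdots)=\lim_{j\to\infty} f_{\omega_1\cdots\omega_{(j+1)k}}(z)=\Theta(\omega)$, one gets $E_k=\Theta(S)=E$ for every $k$. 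Hence $\dim_H E_k=\dim_H E$ for all $k$ and the first limit is immediate; the genuine content lies in the second one.

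For $\lim_{k\to\infty} s_k=\dim_H E$ I would apply Lemma \ref{prop90} to the shift-invariant set $S$, giving $\lim_k s_k=\dim_H S$, where $s_k$ is the unique solution of $\sum_{\omega\in S^k}\rho_\omega^{s_k}=1$. It then remains to transfer from the code space to the attractor, that is, to prove $\dim_H S=\dim_H E=s$. Under the inhomogeneous open set condition this is exactly Theorem \ref{thm51}: the coding map $\Theta$ is dimension preserving, so $\dim_H E=s$, the unique root of $\tau(s)=\lim_k(\sum_{\omega\in S^k}\rho_\omega^s)^{1/k}=1$. Putting the pieces together then yields the asserted chain.

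The hard part is reconciling the two roots: $s$ is the zero of the limit $\tau$, whereas each $s_k$ is the zero of the finite sum $a_k(t):=\sum_{\omega\in S^k}\rho_\omega^t$. To close this I would first note the submultiplicativity $a_{k+l}(t)\le a_k(t)\,a_l(t)$, which holds because the length-$l$ suffix of a word in $S^{k+l}$ again lies in $S^l$ by shift invariance; iterating gives $\tau(t)\le a_k(t)^{1/k}$, so that $\tau(s_k)\le a_k(s_k)^{1/k}=1=\tau(s)$ and hence $s_k\ge s$ for every $k$ by the strict monotonicity of $\tau$ recorded in the proof of Lemma \ref{lema52}. For the reverse estimate I would fix $t>s$, so $\tau(t)<1$; since $a_k(t)^{1/k}\to\tau(t)<1$ we get $a_k(t)<1=a_k(s_k)$ for all large $k$, and as $a_k$ is decreasing this forces $s_k<t$ eventually, whence $\limsup_k s_k\le t$. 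Letting $t\downarrow s$ gives $\lim_k s_k=s$, which is precisely the content repackaged in Lemma \ref{prop90} and may instead be quoted from it directly.
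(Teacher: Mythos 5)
Your proposal is correct, and its skeleton --- the systems $I_k=\{f_\omega:\omega\in S^k\}$ from Theorem \ref{thm000}, Liu's formula (Lemma \ref{prop90}), and Falconer's Theorem \ref{thm51} under the open set condition --- is the same as the paper's. What you do differently is supply the two steps the paper leaves implicit, and these are exactly where its proof is thinnest. First, you fix the meaning of $E_k$: under the block-coding reading forced by Theorem \ref{thm000} (the admissible $I_k$-codes are the block sequences whose concatenation lies in $S$), one has $E_k=\Theta(S)=E$ for every $k$, so the middle term of the chain is trivial; the paper instead treats $s_k$ as ``the Hausdorff dimension of $E_k$'', which under this reading is false for finite $k$ ($s_k$ is only the similarity root of $I_k$), while under the alternative full-shift reading of $E_k$ it would hold but the decomposition $F=\bigcup_k F_k$ of Theorem \ref{thm000} would fail. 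Second, you prove the reconciliation $\lim_k s_k=s$ by comparing the roots of the finite sums $a_k(t)=\sum_{\omega\in S^k}\rho_\omega^t$ with the root of $\tau$: submultiplicativity of $a_k$ (valid by shift invariance of $S$) gives $\tau(t)\le a_k(t)^{1/k}$, hence $s_k\ge s$, and $a_k(t)^{1/k}\to\tau(t)<1$ for $t>s$ gives $\limsup_k s_k\le s$. The paper has no counterpart to this step: it quotes Lemma \ref{prop90}, which only yields $\dim_H S=\lim_k s_k$ in the code-space metric, and silently identifies $\dim_H S$ with $\dim_H E$; that identification is precisely Theorem \ref{thm51} plus your root comparison. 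As a bonus, your route needs the open set condition only for the original IFS (so the paper's unproved claim that the IOSC passes to each $I_k$ becomes unnecessary) and makes the citation of Lemma \ref{prop90} optional. The only item of the paper's proof you omit --- that $\dim_H F_k=\max\{s_k,\dim_H C\}$ and $\dim_H F_k\to\dim_H F$ --- concerns the condensation set and is not part of the displayed chain of equalities being proved.
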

\begin{proof}
By Theorem \ref{thm000}, there exists a sequence of IFSs $\{I_k\}_{k=1}^{\infty}$ with ISSS sets $F_k$ such that $F= \bigcup_{k=1}^\infty F_k.$
 It is easy to observe that if \( \{ f_i \}_{i=1}^N \) satisfies the IOSC, then each \( I_k \) also satisfies the IOSC. 
 If  $s_k$ represents the Hausdorff dimension of $E_k$, which satisfies

\[
\lim_{k \to \infty} \left( \sum_{\omega \in S^k} \rho_\omega^{s_k} \right)^{1/k} = 1,
\]

then the Hausdorff dimension of $F_k$ is $\max\{s_k, \dim_H(C)\}$.

By Lemma \ref{prop90}, we have
\[
\dim_H E = \lim_{k \to \infty} \dim_H E_k = \lim_{k \to \infty} s_k,
\]
and $
\dim_H F_k \to \dim_H F
.$ This completes the assertion. 
\end{proof}

\section{Self-similar structure on the product of IFS}
In this section, we define the product of inhomogeneous iterated function systems (IFSs) and examine the associated separation conditions. We conclude by demonstrating that the product of the inhomogeneous invariant measures corresponding to the individual IFSs constitutes an inhomogeneous type invariant measure for the product IFS.
Let $(X,d_1)$ and $(Y,d_2)$ be two complete metric spaces; then it is well known that the product space $(X \times Y,d)$ is a complete metric space with respect to the metric $d$ given by
$$d((x,y),(z,w))= \max\{d_1(x,z),d_2(y,w)\}.$$
Now, for two IFSs
$\mathcal{I}_1= \{(X,d_1); f_1,f_2,\ldots,f_N\}$ and $\mathcal{I}_2= \{(Y,d_2); g_1,g_2,\ldots,g_M\}$ we define product IFS $\mathcal{P}= \{X \times Y; \Phi_{ij}, (i,j) \in \{1,2,\ldots,N\}\times \{1,2,\ldots,M\}\}$, where $\Phi_{(ij)}(x,y) = (f_i(x),g_j(y)).$ 
It is easy to observe that $\Phi_{ij}$ is contraction for all $ij \in \{1,2,\ldots,N\}\times \{1,2,\ldots,M\}.$
For any $(x,y),(z,w) \in X \times Y,$ we have 
\begin{equation*}
    \begin{aligned}
        d(\Phi_{ij}(x,y),\Phi_{ij}(z,w)) &= d((f_i(x),g_j(y),(f_i(z),g_j(w))) \\& = \max(d_1(f_i(x),f_i(z)),d_2(g_j(y),g_j(w))) \\& \le \{c^1_i d_1(x,z), c^2_j d_2(z,w)\} \\& \le c \max\{d_1(x,z), d_2(z,w)\} \\&= c~d((x,y),(z,w)),
    \end{aligned}
\end{equation*}
where $c= \max\{c^1_i,c^2_j\}.$ Let $A_{\mathcal{I}_1}$ and $A_{\mathcal{I}_2}$ be the attractors corresponding to the IFSs ${\mathcal{I}_1}$ and ${\mathcal{I}_2},$ respectively then
 It is easy to verify that the attractor corresponding to the product IFS $\mathcal{P}$ is $A_{\mathcal{I}_1} \times A_{\mathcal{I}_2}.$
 We claim that $\bigcup_{i,j} f_i(A_{\mathcal{I}_1}) \times g_j(A_{\mathcal{I}_2}) = \bigcup_{i} f_i(A) \times \bigcup_{j} g_j(A_{\mathcal{I}_2}).$
 
  \begin{equation*}
     \begin{aligned}
       \text{Let}~ &(x,y) \in \bigcup_{i,j} f_i(A_{\mathcal{I}_1}) \times g_j(A_{\mathcal{I}_2}) \\& \Longleftrightarrow (x,y) \in f_i(A_{\mathcal{I}_1}) \times g_j(A_{\mathcal{I}_2}) ~ \text{for some}~ (ij) \in \{1,2,\ldots,N\}\times \{1,2,\ldots,M\} \\& \Longleftrightarrow (x,y) \in f_i(A_{\mathcal{I}_1}) \times g_j(A_{\mathcal{I}_2}) \\& \Longleftrightarrow x \in f_i(A_{\mathcal{I}_1}) ~\text{and}~ y \in  g_j(A_{\mathcal{I}_2}) \\& \Longleftrightarrow x \in \bigcup_{i} f_i(A_{\mathcal{I}_1}) ~ \text{and} ~ y \in \bigcup_{j} g_j(A_{\mathcal{I}_2}) \\& \Longleftrightarrow (x,y) \in \bigcup_{i} f_i(A_{\mathcal{I}_1}) \times \bigcup_{j} g_j(A_{\mathcal{I}_2}).
     \end{aligned}
 \end{equation*}
Now,
 \begin{equation*}
     \begin{aligned}
   A_{\mathcal{I}_1} \times A_{\mathcal{I}_2} &= \bigcup_{i} f_i(A_{\mathcal{I}_1}) \times \bigcup_{i} g_j(A_{\mathcal{I}_1}) \\&= \bigcup_{i,j} f_i(A_{\mathcal{I}_1}) \times g_j(A_{\mathcal{I}_2}).
     \end{aligned}
 \end{equation*}
 Similarly, we define the product of two inhomogeneous IFS. Let $\mathcal{I}_{C_1}=   \{(X,d_1); f_1,f_2,\ldots,\\f_N,C_1\}$ and $\mathcal{I}_{C_2}= \{(Y,d_2); g_1,g_2,\ldots,g_M, C_2\}$ be two inhomogeneous IFSs with condensation set $C_1 \subset X$ and $C_2 \subset Y$ respectively, then we define inhomogeneous product IFS $\mathcal{P}_C= \{X\times Y; \Phi_{ij},(C_1,C_2),(i,j) \in \{1,2,\ldots,N\}\times \{1,2,\ldots,M\}\}.$ If $A_{C_1}$ and $A_{C_2}$ are the inhomogeneous attractors corresponding to the IFSs $\mathcal{I}_{C_1}$ and $\mathcal{I}_{C_2},$ respectively.  $A_{C_1} \times A_{C_2}$ is a super-self-similar set corresponding to the product IFS $\mathcal{P}_C,$ i.e., $A_{C_1} \times A_{C_2} \supseteq \bigcup_{i,j} \Phi_{ij}(A_{C_1}\times A_{C_2}) \cup C_1 \times C_2.$
\begin{theorem}
    Let $A_{C_1}$ and $A_{C_2}$ be the inhomogeneous attractors corresponding to the inhomogeneous IFSs $\mathcal{I}_{C_1}=    \{(X,d_1); f_1,f_2,\ldots,f_N,C_1\}$ and $\mathcal{I}_{C_2}= \{(Y,d_2); g_1,g_2,\ldots,g_M, C_2\},$ respectively. Let $\mathcal{P}_C= \{X\times Y; \Phi_{ij},(C_1,C_2),(i,j) \in \{1,2,\ldots,N\}\times \{1,2,\ldots,M\}\}$ be the product IFS, then $A_{C_1} \times A_{C_2}$ be a super-self-similar set.
\end{theorem}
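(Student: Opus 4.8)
The plan is to verify the super-self-similar inclusion
\[
A_{C_1}\times A_{C_2} \supseteq \bigcup_{i,j}\Phi_{ij}(A_{C_1}\times A_{C_2})\cup (C_1\times C_2)
\]
directly, using the defining inhomogeneous equations of the two factor attractors together with the product-of-unions identity established in the discussion preceding the statement. First I would recall that, by definition of the inhomogeneous attractors,
\[
A_{C_1}=\bigcup_{i=1}^N f_i(A_{C_1})\cup C_1, \qquad A_{C_2}=\bigcup_{j=1}^M g_j(A_{C_2})\cup C_2.
\]
Each of these equalities yields the two elementary inclusions that are all I need: $\bigcup_i f_i(A_{C_1})\subseteq A_{C_1}$ and $C_1\subseteq A_{C_1}$, and symmetrically $\bigcup_j g_j(A_{C_2})\subseteq A_{C_2}$ and $C_2\subseteq A_{C_2}$.

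Next I would rewrite the union of the images under the product maps. Since $\Phi_{ij}(x,y)=(f_i(x),g_j(y))$, applying the product-of-unions identity proved above gives
\[
\bigcup_{i,j}\Phi_{ij}(A_{C_1}\times A_{C_2})=\bigcup_{i,j} f_i(A_{C_1})\times g_j(A_{C_2})=\Big(\bigcup_i f_i(A_{C_1})\Big)\times\Big(\bigcup_j g_j(A_{C_2})\Big).
\]
By the inclusions above together with monotonicity of the Cartesian product, the right-hand side is contained in $A_{C_1}\times A_{C_2}$; likewise $C_1\times C_2\subseteq A_{C_1}\times A_{C_2}$. Taking the union of these two inclusions delivers the desired super-self-similar inclusion.

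Finally, I would point out that equality does \emph{not} hold in general, which is precisely why the product is only super-self-similar rather than an inhomogeneous self-similar set: expanding $A_{C_1}\times A_{C_2}=\big(\bigcup_i f_i(A_{C_1})\cup C_1\big)\times\big(\bigcup_j g_j(A_{C_2})\cup C_2\big)$ produces the cross terms $\big(\bigcup_i f_i(A_{C_1})\big)\times C_2$ and $C_1\times\big(\bigcup_j g_j(A_{C_2})\big)$, neither of which is captured by $\bigcup_{i,j}\Phi_{ij}(A_{C_1}\times A_{C_2})\cup(C_1\times C_2)$. There is no serious obstacle in this argument; the only point requiring genuine care is the product-of-unions identity, namely the fact that mapping under $\Phi_{ij}$ decouples into the two factors, which is exactly the elementary set-theoretic computation carried out just before the statement.
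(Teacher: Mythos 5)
Your proof is correct and takes essentially the same approach as the paper: both start from the defining inhomogeneous equations $A_{C_1}=\bigcup_i f_i(A_{C_1})\cup C_1$, $A_{C_2}=\bigcup_j g_j(A_{C_2})\cup C_2$, invoke the product-of-unions identity established just before the theorem, and conclude by elementary set inclusion (the paper expands $A_{C_1}\times A_{C_2}$ into four terms and discards the two cross terms, which is the same computation as your monotonicity argument). Your closing remark about the cross terms $\bigl(\bigcup_i f_i(A_{C_1})\bigr)\times C_2$ and $C_1\times\bigl(\bigcup_j g_j(A_{C_2})\bigr)$ obstructing equality is precisely what the paper's expansion exhibits implicitly.
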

\begin{proof}
   Since $A_{C_1}$ and $A_{C_2}$ are the inhomogeneous attractors. We have
   $$A_{C_1}= \bigcup_{i=1}^N f_i (A_{c_1}) \cup C_1, ~ \text{and}~ A_{C_2}= \bigcup_{j=1}^M g_j(A_{C_2}) \cup C_2.$$
   Also,
   \begin{equation*}
   \begin{aligned}
       A_{C_1} \times A_{C_2} &= \bigg(\bigcup_{i} f_i(A_{C_1}) \cup C_1 \bigg) \times \bigg(\bigcup_{j} g_j(A_{C_2}) \cup C_2 \bigg) \\&= \bigg(\bigcup_{i} f_i(A_{C_1}) \times \bigcup_{j} g_j(A_{C_2})\bigg) \cup \bigg(\bigcup_{i} f_i(A_{C_2}) \times C_2 \bigg) \cup \\& \hspace{4cm} \bigg(C_1 \times \bigcup_{j} g_j(A_{C_2}) \bigg) \cup (C_1 \times C_2)  \\& \supseteq \bigcup_{i,j} f_i(A_{C_1}) \times g_j(A_{C_2}) \cup (C_1 \times C_2).
       \end{aligned}
   \end{equation*}
\end{proof}
 In the next theorem, we see that the inhomogeneous separation conditions on $\mathcal{I}_{C_1}$ and $\mathcal{I}_{C_2}$ can be transferred to the inhomogeneous product IFS $\mathcal{P}_C$.
\begin{theorem}
    Let $\mathcal{I}_1 = \{(X, d_1); f_1, f_2, \ldots, f_N, C_1\}$ and $\mathcal{I}_2 = \{(Y, d_2); g_1, g_2, \ldots, g_M, C_2\}$ be two IFSs. Then the following hold:
    \begin{enumerate}
        \item The IFS $\mathcal{P}_C$ satisfies the ISSC if the IFSs $\mathcal{I}_{C_1}$ and $\mathcal{I}_{C_2}$ satisfy the ISSC.
        \item The IFS $\mathcal{P}_C$ satisfies the IOSC if the IFSs $\mathcal{I}_{C_1}$ and $\mathcal{I}_{C_2}$ satisfy the IOSC.
    \end{enumerate}
\end{theorem}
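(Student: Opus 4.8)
The plan is to reduce both separation statements to the single elementary observation that, for sets $A, A' \subseteq X$ and $B, B' \subseteq Y$, one has $(A \times B) \cap (A' \times B') = (A \cap A') \times (B \cap B')$, so that $A \times B$ and $A' \times B'$ are disjoint precisely when $A \cap A' = \emptyset$ or $B \cap B' = \emptyset$. Recall that the ISSC for $\mathcal{I}_{C_1}$ asserts that the images $f_1(A_{C_1}), \ldots, f_N(A_{C_1})$ are pairwise disjoint and each is disjoint from $C_1$, while the IOSC for $\mathcal{I}_{C_1}$ asserts the existence of a non-empty bounded open set $U_1 \subseteq X$ with $C_1 \subseteq U_1$, $\bigcup_{i} f_i(U_1) \subseteq U_1$, and the $f_i(U_1)$ pairwise disjoint; the analogous statements hold for $\mathcal{I}_{C_2}$ with the $g_j$, the set $C_2$, and some $U_2 \subseteq Y$. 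Since $\Phi_{(ij)}(A_{C_1} \times A_{C_2}) = f_i(A_{C_1}) \times g_j(A_{C_2})$, everything should follow by applying the product-disjointness identity factorwise.

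For part (1), I would fix two distinct index pairs $(ij) \neq (i'j')$ and split into the cases $i \neq i'$ and $i = i'$ (which forces $j \neq j'$). In the first case the ISSC for $\mathcal{I}_{C_1}$ gives $f_i(A_{C_1}) \cap f_{i'}(A_{C_1}) = \emptyset$, and in the second the ISSC for $\mathcal{I}_{C_2}$ gives $g_j(A_{C_2}) \cap g_{j'}(A_{C_2}) = \emptyset$; either way the product-disjointness identity yields $\Phi_{(ij)}(A_{C_1} \times A_{C_2}) \cap \Phi_{(i'j')}(A_{C_1} \times A_{C_2}) = \emptyset$. To check disjointness from the condensation set $C_1 \times C_2$, I would observe that $f_i(A_{C_1}) \cap C_1 = \emptyset$ already forces $(f_i(A_{C_1}) \times g_j(A_{C_2})) \cap (C_1 \times C_2) = (f_i(A_{C_1}) \cap C_1) \times (g_j(A_{C_2}) \cap C_2) = \emptyset$. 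This establishes the ISSC for $\mathcal{P}_C$.

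For part (2), I would take the witnessing open sets $U_1$ and $U_2$ and propose $U := U_1 \times U_2$, a non-empty bounded open subset of $X \times Y$, as the witness for the IOSC of $\mathcal{P}_C$. The inclusion $C_1 \times C_2 \subseteq U_1 \times U_2$ is immediate from $C_1 \subseteq U_1$ and $C_2 \subseteq U_2$. For the invariance I would compute $\bigcup_{(ij)} \Phi_{(ij)}(U) = \bigcup_{i,j} f_i(U_1) \times g_j(U_2) = \big(\bigcup_i f_i(U_1)\big) \times \big(\bigcup_j g_j(U_2)\big) \subseteq U_1 \times U_2$, exactly as in the attractor computation carried out just before the theorem. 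Pairwise disjointness of the $\Phi_{(ij)}(U)$ is then handled by the same two-case argument as in part (1), now applied to $f_i(U_1)$ and $g_j(U_2)$. The step I expect to be most delicate is not the algebra but pinning down the precise formulation of ISSC and IOSC in the inhomogeneous setting, in particular whether the condensation set must be separated from the cylinder images and whether $U$ is required to be bounded; once these definitions are fixed, the product-disjointness identity renders every verification routine.
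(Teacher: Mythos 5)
Your proposal is correct and takes essentially the same route as the paper: part (1) is the factorwise product--disjointness identity applied to the images $\Phi_{(ij)}(A_{C_1}\times A_{C_2}) = f_i(A_{C_1})\times g_j(A_{C_2})$ and to $C_1\times C_2$, and part (2) takes the product of the witnessing open sets (your $U_1\times U_2$ is exactly the paper's $O:=U\times V$), with your explicit two-case split $i\neq i'$ or $j\neq j'$ being merely a spelled-out version of what the paper asserts directly. The one discrepancy is definitional rather than substantive: the paper writes the IOSC containment as $U\subset\overline{C_1}$ while you require $C_1\subseteq U_1$, but since products commute with closures and inclusions in each factor, either convention transfers to $\mathcal{P}_C$ by the same computation.
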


\begin{proof}
    \begin{enumerate}
        \item Since $\mathcal{I}_{C_1}$ and $\mathcal{I}_{C_2}$ satisfy the ISSC, we have 
        \[
        f_i(A_{C_1}) \cap f_j(A_{C_1}) = \emptyset \quad \text{for all } i \neq j, \qquad f_i(A_{C_1}) \cap C_1 = \emptyset \quad \text{for all } 1 \le i \le N,
        \]
        and
        \[
        g_k(A_{C_2}) \cap g_l(A_{C_2}) = \emptyset \quad \text{for all } k \neq l, \qquad g_k(A_{C_2}) \cap C_2 = \emptyset \quad \text{for all } 1 \le k \le M.
        \]
        This leads to
        \[
        \Phi_{ij}(A_{C_1} \times A_{C_2}) \cap \Phi_{kl}(A_{C_1} \times A_{C_2}) = (f_i(A_{C_1}) \times g_j(A_{C_2})) \cap (f_k(A_{C_1}) \times g_l(A_{C_2})) = \emptyset,
        \]
        for all $(i,j) \neq (k,l)$. Since $A_{\mathcal{P}_C} \subseteq A_{C_1} \times A_{C_2}$, it follows that
        \[
        \Phi_{ij}(A_{\mathcal{P}_C}) \cap \Phi_{kl}(A_{\mathcal{P}_C}) = \emptyset \quad \text{for all } (i,j) \neq (k,l).
        \]
        Furthermore,
        \[
        \Phi_{ij}(A_{C_1} \times A_{C_2}) \cap (C_1 \times C_2) = (f_i(A_{C_1}) \times g_j(A_{C_2})) \cap (C_1 \times C_2) = \emptyset.
        \]
        Therefore, the product IFS $\mathcal{P}_{C}$ satisfies the ISSC.

        \item Since $\mathcal{I}_{C_1}$ and $\mathcal{I}_{C_2}$ satisfy the IOSC, there exist open sets $U \subset X$ and $V \subset Y$ such that
        \[
        \bigcup_{i=1}^N f_i(U) \subset U, \quad f_i(U) \cap f_j(U) = \emptyset \quad \text{for all } i \neq j, \quad \text{and } C_1 \subset \overline{U},
        \]
        and
        \[
        \bigcup_{k=1}^M g_k(V) \subset V, \quad g_k(V) \cap g_l(V) = \emptyset \quad \text{for all } k \neq l, \quad \text{and } C_2 \subset \overline{V}.
        \]
        This implies
        \[
        \Phi_{ij}(U \times V) \cap \Phi_{kl}(U \times V) = (f_i(U) \times g_j(V)) \cap (f_k(U) \times g_l(V)) = \emptyset \quad \text{for all } (i,j) \neq (k,l).
        \]
        Moreover,
        \[
        \Phi_{ij}(U \times V) = f_i(U) \times g_j(V) \subset U \times V \quad \text{for all } (i,j) \in \{1, \ldots, N\} \times \{1, \ldots, M\}.
        \]
        Also, since $C_1 \subset \overline{U}$ and $C_2 \subset \overline{V}$, we have
        \[
        C_1 \times C_2 \subset \overline{U} \times \overline{V}.
        \]
        Define an open set $O:= U \times V \subset X \times Y$. Then
        \[
        \bigcup_{ij} \Phi_{ij}(O) \subset O, \quad \Phi_{ij}(O) \cap \Phi_{kl}(O) = \emptyset \quad \text{for all } (i,j) \neq (k,l), \quad \text{and } C \subset \overline{O}.
        \]
        Hence, the product IFS $\mathcal{P}_C$ satisfies the IOSC.
    \end{enumerate}
\end{proof}
Let us see evidence of a connection between fractal measures of the IFSs $\mathcal{I}_1$, $\mathcal{I}_2$, and  $\mathcal{P}_C.$ 
Consider the probability vectors $(p_0, p_1, \ldots, p_N)$ and $(q_0, q_1, \ldots, q_M)$ associated with the IFSs $\mathcal{I}_1$ and $\mathcal{I}_2$, respectively.
Let $\nu_1$ and $\nu_2$ be Borel probability measures on $X$ and $Y$ with supports $C_1$ and $C_2$, respectively. Denote the inhomogeneous invariant measures associated with the IFSs $\mathcal{I}_1$ and $\mathcal{I}_2$ by $\mu_{\mathcal{I}_1}$ and $\mu_{\mathcal{I}_2}$, respectively. Then we have the following result.
\begin{theorem}
    Let $\mathcal{P}_C$ be the inhomogeneous product IFS of $\mathcal{I}_1$ and $\mathcal{I}_2$ as considered above, and let
    \[
\bigg((r_{ij})_{\substack{1\le i\le N,\\ 1\le j\le M}}, \sum_{i=1}^N q_0 p_i+ \sum_{j=1}^M p_0 q_j+ p_0 q_0\bigg) ~\text{where}~ r_{ij} = p_i q_j
\]
be a probability vector. Then, for some Borel probability measure $\nu$ the product measure $\mu_{\mathcal{I}_1} \times \mu_{\mathcal{I}_2}$ is an inhomogeneous-type measure, given by
\[
\mu_{\mathcal{I}_1} \times \mu_{\mathcal{I}_2} = \sum_{i=1,j=1}^{N,M} r_{ij} \mu_{\mathcal{P}_C} \circ \Phi_{ij}^{-1} + \left( \sum_{i=1}^N q_0 p_i + \sum_{j=1}^M p_0 q_j + p_0 q_0 \right) \nu.
\] 
\end{theorem}

\begin{proof}
Recall that
\[
\mu_{\mathcal{I}_1} = \sum_{i=1}^{N} p_i \mu_{\mathcal{I}_1} \circ f_i^{-1} + p_0 \nu_1 ~\text{and}~\mu_{\mathcal{I}_2} = \sum_{j=1}^{M} q_j \mu_{\mathcal{I}_2} \circ g_j^{-1} + q_0 \nu_2.
\]
It follows that
\begin{align*}
\mu_{\mathcal{I}_1} \times \mu_{\mathcal{I}_2} &= \left( \sum_{i=1}^{N} p_i \mu_{\mathcal{I}_1} \circ f_i^{-1} + p_0 \nu_1 \right) \times \left( \sum_{j=1}^{M} q_j \mu_{\mathcal{I}_2} \circ g_j^{-1} + q_0 \nu_2 \right) \\
&= \sum_{i=1,j=1}^{N,M} r_{ij} \mu_{\mathcal{I}_1} \circ f_i^{-1} \times \mu_{\mathcal{I}_2} \circ g_j^{-1} \\
&\quad + \sum_{i=1}^{N} p_i q_0 \mu_{\mathcal{I}_1} \circ f_i^{-1} \times \nu_2 + \sum_{j=1}^{M} p_0 q_j \nu_1 \times \mu_{\mathcal{I}_2} \circ g_j^{-1} + p_0 q_0 \nu_1 \times \nu_2 \\
&= \sum_{i=1,j=1}^{N,M} r_{ij} \left( \mu_{\mathcal{I}_1} \times \mu_{\mathcal{I}_2} \right) \circ \Phi_{ij}^{-1} \\
&\quad + \left( \sum_{i=1}^N p_i q_0 + \sum_{j=1}^M p_0 q_j + p_0 q_0 \right) \nu,
\end{align*}
where
\[
\nu := \frac{
\sum_{i=1}^{N} p_i q_0 \mu_{\mathcal{I}_1} \circ f_i^{-1} \times \nu_2 + \sum_{j=1}^{M} p_0 q_j \nu_1 \times \mu_{\mathcal{I}_2} \circ g_j^{-1} + p_0 q_0 \nu_1 \times \nu_2
}{
\sum_{i=1}^N p_i q_0 + \sum_{j=1}^M p_0 q_j + p_0 q_0
}
\]
is a Borel probability measure on $X \times Y$.
\end{proof}
\section{Conclusion and future remarks}\label{section 8}
We have established significant results in dimension theory and constructed new fractal sets using Falconer's sub-self-similar concept. This work generalizes several existing results in the literature and, in the future, will be extended to more general fractal sets or attractors of dynamical systems. The fractal sets we have constructed have potential applications in the animation industry, computer-aided geometric design (CAGD), and graphic design. In future work, we plan to estimate the Assouad dimension of these ISSS sets and investigate the dimension spectrum.

\section*{Statements and Declarations}
\textbf{Data availability:} Data sharing is not applicable to this article as no data sets were generated or analyzed during the current study.\\
  \textbf{Funding:} The first author thanks IIIT Allahabad (Ministry of Education, India) for financial support through a Senior Research Fellowship. The second author acknowledges financial support from the SEED grant project of IIIT Allahabad.\\
 \textbf{Conflict of interest:} 
We declare that we have no conflicts.\\
 \textbf{Author Contributions:} 
 All authors contributed equally to this manuscript.
 \bibliographystyle{amsplain}

\end{document}